\theoremstyle{plain}
\newtheorem{thm}{Théorème}[section]
\newtheorem{df}[thm]{Définition}
\newtheorem{lem}[thm]{Lemme}
\newtheorem{prop}[thm]{Proposition}
\newtheorem{rmq}[thm]{Remarque}
\newtheorem{cor}[thm]{Corollaire}
\newtheorem*{thm*}{Théorème}
\newtheorem{pb}[thm]{Problème}
\newcommand{\mot}{}
\newtheorem*{thmref_interne}{\mot{}}
\newenvironment{thmref}[2]{
	\renewcommand{\mot}{#1 #2}
	\begin{thmref_interne}}
	{\end{thmref_interne}
}
\newcommand\mc{\mathcal}
\newcommand\mr{\mathrm}
\newcommand\F{\mc{F}}
\renewcommand\emph[1]{\textup{\textbf{#1}}}
\title{
Dualité complexe entre sous-variétés réelles dans $\mathbb{P}^2(\mathbb{C})$
}
\author{Olivier Thom}
\date{\today}
\thanks{L'auteur aimerait remercier l'IMPA et PNPD-CAPES pour le support qu'ils lui ont fourni}
\begin{document}

\begin{abstract}
On introduit une notion de sous-variété réelle semi-legendrienne dans une variété de contact complexe de dimension 3 et on prouve que les sous-variétés réelles de $\mathbb{C}^2$ ont un relevé unique dans $\mathbb{C}^3$.
On en déduit alors une dualité complexe entre les sous-variétés réelles de $\mathbb{P}^2(\mathbb{C})$.
\\
\\
\\
\textsc{Abstract.} We introduce a notion of semi-legendrian real submanifold in a complex contact manifold of dimension 3 and prove that real submanifolds of $\mathbb{C}^2$ can be uniquely lifted to $\mathbb{C}^3$.
Then we deduce a complex duality between real submanifolds of $\mathbb{P}^2(\mathbb{C})$.
\end{abstract}

\maketitle

\section{Introduction}

L'étude des germes de surfaces réelles dans $\mathbb{C}^2$ modulo biholomorphisme se révèle assez rapide : dans \cite[Chapitre I, I]{cartan_geometrie_pseudo_conforme_1}, E. Cartan montre qu'une surface analytique réelle est soit une courbe complexe, soit biholomorphe au voisinage d'un point générique à la surface $\mathbb{R}^2\subset \mathbb{C}^2$.
Cependant on ne saurait se satisfaire de ce résultat qui ne donne que peu d'informations sur le comportement d'une surface réelle vis-à-vis des sous-variétés holomorphes de l'espace ambient, même d'un point de vue local.

Dans cet état d'esprit, on se rappelle l'article \cite{thom_varietes_ordre_fini} de R. Thom, où l'auteur pose le problème suivant :

\begin{pb}
\label{pb_1}
Soit $V$ une variété réelle de dimension $2n$ compacte plongée dans $\mathbb{P}^{n+k}(\mathbb{C})$.
Supposons qu'il existe un ouvert dense $U$ dans la grassmannienne des k-plans complexes de $\mathbb{P}^{n+k}(\mathbb{C})$ tel que tout k-plan de $U$ intersecte $V$ en un nombre fixe de points.
Cette propriété implique-t-elle que $V$ soit une variété algébrique complexe ?
\end{pb}

Il enchaine en montrant qu'il suffit de considérer le cas où $V$ est une surface réelle $S$ dans $\mathbb{P}^2(\mathbb{C})$, puis donne une idée de preuve malheureusement incomplète.

Quelques années plus tard, W.F. Pohl publia un article prouvant ce résultat de manière plus satisfaisante (voir \cite{pohl_ordnungsgeometrie}).
En étudiant un peu plus le sujet on s'aperçoit qu'on peut en fait le voir comme une conséquence d'une dualité entre sous-variétés réelles de $\mathbb{P}^2(\mathbb{C})$ : à chaque surface réelle $S$ on peut associer l'ensemble dual $\check{S}\subset \check{\mathbb{P}}^2(\mathbb{C})$ des droites complexes qui intersectent $S$ de manière non transverse.
Cet ensemble est en général une hypersurface réelle, et le problème \ref{pb_1} peut être vu comme portant sur cette hypersurface.

Cette construction n'est pas sans rappeler la dualité entre courbes dans $\mathbb{P}^2$.
Cette dernière se comporte très bien au sens où le bidual d'une courbe $C$ est en général $C$ elle-même, puisque l'enveloppe de la famille des droites tangentes à $C$ est égale à $C$.
Dans le cas présent, il n'y a pas de construction inverse évidente, et il n'est pas trivial a priori que deux surfaces ne puissent pas donner le même dual (c'est le problème de droites dites bitangentes dans \cite{thom_varietes_ordre_fini} et \cite{pohl_ordnungsgeometrie}, que l'on devrait plutôt qualifier de bicritiques pour ne pas confondre avec les droites tangentes).

Dans cet article, on va prouver que cette dualité entre sous-variétés réelles est en fait plus générale en introduisant une notion de sous-variété semi-legendrienne dans une variété de contact complexe de dimension 3, et une construction qui permet de relever une sous-variété réelle de $\mathbb{C}^2$ en une sous-variété semi-legendrienne de $\mathbb{C}^3$.

On commencera en partie \ref{sec_rappels} par rappeler quelques faits sur les plans réels linéaires et les intersections locales entre surfaces réelles et droites complexes.

Dans la partie \ref{sec_exceptions}, on présentera quelques surfaces qui se comportent de manière exceptionnelles.
Celles-ci constitueront les cas les plus dégénérés pour la dualité, d'où l'utilité de bien les comprendre.

Puis, dans la section \ref{sec_semi_legendrienne}, on définira puis étudiera les sous-variétés semi-legendriennes.
Si $p$ désigne la fibration $p: \mathbb{P}(T \mathbb{P}^2) \rightarrow \mathbb{P}^2$, on construira des relevés $p^*N$ semi-legendriens pour des sous-variétés lisses $N$ de dimension $\leq 3$.
Pour obtenir un résultat global sur les sous-variétés avec des points singuliers, on introduira une stratification sur $N$ vérifiant une condition (C) (une version forte de la condition (A) de Whitney) et on prouvera les résultats suivants.

\begin{thmref}{Théorème}{\ref{thm_stratification_1}}
Soit $N$ une sous-variété réelle de dimension inférieure ou égale à 3 dans $\mathbb{P}^2(\mathbb{C})$, de régularité $\mc{C}^2$.
Supposons que $N$ admette une stratification $N=\cup N_i$ vérifiant la condition (C).
Alors $p^*N$ est une sous-variété de régularité $\mc{C}^1$ dans $\mathbb{P}(T \mathbb{P}^2(\mathbb{C}))$ et admet une stratification dont les strates de dimension 3 sont semi-legendriennes.
De plus, $p(p^*N)=N$.
\end{thmref}

\begin{thmref}{Théorème}{\ref{thm_stratification_2}}
Soit $M$ une sous-variété réelle de dimension trois, de régularité $\mc{C}^2$ dans $\mathbb{P}(T \mathbb{P}^2(\mathbb{C}))$.
On suppose que $N:=p(M)$ est une sous-variété de $\mathbb{P}^2(\mathbb{C})$, et que $M$ et $N$ admettent des stratifications $M=\cup M_i$ et $N=\cup N_i$ de sorte que les strates de dimension 3 de $M$ soient semi-legendriennes et que $(N_i)$ vérifie la condition (C).
On suppose de plus que $p$ envoie toute strate de $M$ dans une strate de $N$, et est une submersion en restriction à chaque strate.
Alors $M\subset p^*(p(M))$.
\end{thmref}

Finalement, dans la section \ref{sec_dualite} on introduira la dualité entre sous-variétés réelles de $\mathbb{P}^2(\mathbb{C})$ et sous-variétés réelles de $\check{\mathbb{P}}^2(\mathbb{C})$.
Le lemme suivant sera une conséquence rapide des théorèmes précédents.

\begin{thmref}{Lemme}{\ref{lem_bidualite_generique}}
Soit $N$ un germe de sous-variété lisse générique de régularité $\mc{C}^2$ dans $\mathbb{P}^2(\mathbb{C})$.
Alors son dual $\check{N}\subset \check{\mathbb{P}}^2(\mathbb{C})$ est lisse de régularité $\mc{C}^1$ et vérifie $N=\check{\check{N}}$.
\end{thmref}

En application de toutes ces notion, on proposera en fin de section \ref{sec_dualite} une preuve du problème \ref{pb_1}.
Un énoncé plus précis de ce résultat, en dimension et codimension quelconques est le suivant.

\begin{thmref}{Théorème}{\ref{cor_principal}}
Soit $V$ une sous-variété réelle connexe compacte irréductible de dimension réelle $2n$ et de régularité $\mc{C}^2$ de $\mathbb{P}^{n+k}(\mathbb{C})$, et $G$ la grassmannienne des k-plans linéaire complexes.
Alors ou bien
\begin{enumerate}
\item $V$ est une sous-variété algébrique complexe, ou
\item $V$ est le compactifié d'une sous-variété affine réelle de $\mathbb{C}^{n+k}$, ou
\item il existe deux ouverts non vides $U_1, U_2\subset G$ tels que tout plan de $U_i$ intersecte $V$ transversalement en $n_i$ points, et $n_1\neq n_2$.
\end{enumerate}
\end{thmref}

\section{Rappels sur les surfaces réelles}
\label{sec_rappels}

\subsection{Plans réels}

Considérons un 2-plan réel linéaire $P\subset \mathbb{C}^2$, et des coordonnées holomorphes $(x,y)=(x_1+ix_2,y_1+iy_2)$ dans $\mathbb{C}^2$.
On considèrera que la base $(\frac{\partial}{\partial x_1},\frac{\partial}{\partial x_2},\frac{\partial}{\partial y_1},\frac{\partial}{\partial y_2})$ est directe et on notera $J: \mathbb{C}^2 \rightarrow \mathbb{C}^2$ l'opérateur de multiplication par $i$.

Prenons une base orthonormale $(t_1,t_2)$ de $P$, et une base orthonormale $(n_1,n_2)$ de $P^\perp$ telles que $(t_1,t_2,n_1,n_2)$ soit une base directe.
Le vecteur $Jt_1$ est orthogonal à $t_1$ donc se décompose
\[
Jt_1 = \mr{cos}(\theta)t_2 + \mr{sin}(\theta)n,
\]
où $n\in P^\perp$ et $\theta\in [0,\pi]$.
Cet angle $\theta$ ne dépend en fait que du plan orienté $P$ et s'appelle l'\emph{angle holomorphe} de $P$ (ou angle de Wirtinger).
Le vecteur $n$ correspond à la projection sur $P^\perp$ de $Jt_1$, et on voit que l'application $t_1 \mapsto n$ est une isométrie indirecte de $P$ vers $P^\perp$.

Le plan $P$ permet de déterminer une unique structure complexe $j$ telle que $P$ soit une direction complexe et $t_2=jt_1$.
En effet $j$ est entièrement déterminée sur $P$ ; en outre $P^\perp$ doit aussi être une direction complexe donc stable par $j$.
Comme $(t_1,t_2,n_1,n_2)$ est une base directe, on doit avoir $jn_1=n_2$, ce qui permet de conclure.

En fait, la grassmannienne $\mr{Gr} := \mr{G}_{\mathbb{R}}^+(2,4)$ des 2-plans réels orientés est homéomorphe à un produit de deux sphères.
La première sphère représente la structure complexe $j$ et la deuxième correspond à la droite projective complexe associée.
L'angle complexe $\theta$ correspond alors à l'angle dans la première sphère entre $j$ et $J$ (voir \cite[\S 4]{chen+morvan_totally_real_surfaces}).

Revenons au 2-plan réel $P$.
Une droite complexe $D$ peut se comporter de trois façons par rapport à $P$ : il se peut que $D$ soit tangente à $P$, i.e. $D=P$ ; il se peut que $D$ soit transverse à $P$, i.e. $P\oplus D=\mathbb{C}^2$ ; et il se peut que $D\cap P = \mathbb{R} v$ pour un vecteur $v$.
Caractérisons le cas où $D$ et $P$ ne sont pas transverses.
Supposons que $D$ ait pour équation $y=\lambda x$ et que $P$ soit défini par 
\[
\left\{ \begin{aligned}
f_1 &= \alpha_1 x_1 + \beta_1 x_2 + \gamma_1 y_1 + \delta_1 y_2\\
f_2 &= \alpha_2 x_1 + \beta_2 x_2 + \gamma_2 y_1 + \delta_2 y_2.
\end{aligned} \right.
\]
Introduisons 
\[
\Omega_P := df_1 \wedge df_2
\]
et
\[
\Omega_{\lambda} := \frac{-1}{2i}(dy-\lambda dx) \wedge (d\bar{y} - \bar{\lambda}d\bar{x}).
\]
Les plans $D$ et $P$ ne sont pas transverses si et seulement si 
\begin{equation}
\label{eq_cercle_critique}
\begin{aligned}
0 &= \Omega_P \wedge \Omega_{\lambda}\\
&= (\alpha_1 \beta_2 - \alpha_2 \beta_1) + \lambda_1 (\alpha_1 \delta_2 - \alpha_2 \delta_1 + \beta_1 \gamma_2 - \beta_2 \gamma_1)\\
&\quad - \lambda_2 (\beta_1 \delta_2 - \beta_2 \delta_1 + \alpha_1 \gamma_2 - \alpha_2 \gamma_1) + |\lambda|^2 (\gamma_1 \delta_2 - \gamma_2 \delta_1).
\end{aligned}
\end{equation}

On voit ainsi que l'ensemble des directions complexes non transverses est un cercle réel sur la sphère $\mathbb{P}^1(\mathbb{C})$ (rappelons que la notion de cercle est stable par transformation de Möbius).
On nommera \emph{cercle critique} de $P$ l'ensemble des directions $\lambda\in \mathbb{P}^1(\mathbb{C})$ telles que la droite $D$ n'est pas transverse à $P$.

De plus, le rayon d'un cercle est bien défini modulo les transformations de Möbius obtenues par projectivisation de $\mr{U}_2(\mathbb{C})$ ; modulo l'action de ce groupe, on peut supposer que $f_1=y_2$ et $f_2 = \mr{cos}(\theta)y_1+\mr{sin}(\theta)x_2$.
On obtient alors l'équation 
\begin{equation}
\label{eq_non_transverse}
\mr{cos}(\theta) |\lambda|^2 = \mr{sin}(\theta) \lambda_2.
\end{equation}

On voit donc que le rayon du cercle critique est determiné par $\theta$.

\begin{figure}[H]
\label{fig_cercle_critique}
\begin{center}
\begin{tikzpicture}
\draw[>=stealth,->] (-2,0) -- (2,0);
\draw (2,0) node[above] {$\lambda_1$};
\draw[>=stealth,->] (0,-1) -- (0,3);
\draw (0,3) node[right] {$\lambda_2$};
\draw (0,1) circle (1);

\draw (0,0) node {$\bullet$};
\draw (0,-0.5) node[below,right] {$\lambda=0$};
\draw (0,2) node {$\bullet$};
\draw (0.5,2) node[above,right] {$\lambda=i\mr{tan}(\theta)$};
\end{tikzpicture}
\end{center}
\caption{Le cercle critique après changement de coordonnées.}
\end{figure}
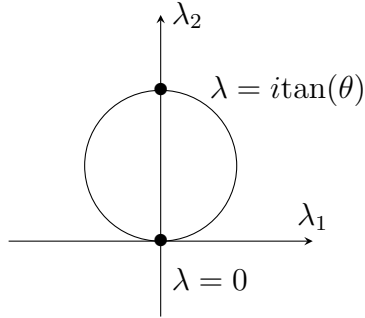

\subsection{Intersection d'une surface et d'une droite complexe}

Soit $(S,0)$ un germe de surface réelle générique de $\mathbb{C}^2$ de régularité $\mc{C}^2$ et $D_0$ une droite complexe de pente $\lambda$ passant par l'origine.
On suppose que $D_0$ intersecte $S$ non transversalement à l'origine.
On s'intéresse à l'intersection entre un translaté $D_\mu$ de $D_0$ et $S$.
Dans l'espace des paramètres $\mu\in \mathbb{C}$, on voit que pour $S$ générique, l'ensemble des paramètres $\mu$ tels que $D_\mu$ intersecte $S$ non transversalement est une courbe réelle et qu'elle sépare le germe d'espace $(\mathbb{C}_\mu,0)$ en deux régions où l'intersection vaut respectivement 0 et 2.
Ceci peut être vu par un calcul direct, par des considérations liés à la projection $p_{\lambda}:\mathbb{C}^2 \rightarrow \mathbb{C}$ comme dans \cite{thom_varietes_ordre_fini}, ou via les droites sécantes, comme le propose \cite{pohl_ordnungsgeometrie}.

Il en résulte que pour une direction $\lambda\in \mathbb{P}^1(\mathbb{C})$, l'ensemble $C_{\lambda}$ des points de $S$ pour lesquels $\lambda$ est une direction non transverse est génériquement une courbe.

Dans son article, W. Pohl a mis en évidence que certains couples $(p,\lambda)$ où $\lambda$ est une direction critique en $p\in S$ se comportaient de manière exceptionnels (les points de type F selon sa terminologie).
Donnons quelques caractérisations équivalentes de ces points.

\begin{lem}
\label{lem_exceptionnel}
Soit $(S,0)$ un germe de surface réel lisse de $\mathbb{C}^2$ tel que $T_0S$ ne soit pas complexe.
Considérons une droite $d$ passant par l'origine dans une direction $\lambda$ critique.
Notons $v$ un vecteur tangent à $S$ à l'origine tel que $\mr{Vect}_{\mathbb{C}}(v) = d$.
Les conditions suivantes sont équivalentes :
\begin{enumerate}
\item\label{lcsse_item_1} Le vecteur de courbure $k$ d'une courbe $(C,0)\subset (S,0)$ passant à l'origine dans la direction $v$ vérifie $k\in d$.
\item\label{lcsse_item_2} La courbe $C_{\lambda}$ est singulière ou vérifie $T_0C_{\lambda} = v$.
\item\label{lcsse_item_3} La courbe $C_{\lambda}$ est singulière ou admet une paramétrisation régulière $c(t)$ telle que la droite complexe $d(t)$ passant en $c(t)$ dans la direction $\lambda$ ait pour équation $y=\lambda x + \mu(t)$ avec $\mu'(0)=0$.
\item\label{lcsse_item_4} Pour une courbe $c(t)$ sur $S$ passant à l'origine dans la direction $v$, le point $\lambda$ sur $\mathbb{P}^1(\mathbb{C})$ est un point fixe à l'ordre 1 de la famille des cercles critiques en $c(t)$.
\end{enumerate}
\end{lem}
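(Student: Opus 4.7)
Le plan consiste à choisir des coordonnées dans lesquelles $S$ s'écrit comme un graphe et à traduire chacune des quatre conditions en la même équation linéaire en les dérivées secondes du graphe. Puisque $T_0S$ n'est pas complexe, la projection $(x,y)\mapsto x$ envoie $T_0S$ bijectivement sur $\mathbb{C}$ ; on peut donc écrire $S$ localement comme $y=F(x)$ avec $F:(\mathbb{C},0)\to(\mathbb{C},0)$ de classe $\mc{C}^2$. On note $L(x):\mathbb{R}^2\to\mathbb{C}$ la différentielle $\mathbb{R}$-linéaire de $F$ en $x$, de sorte que le plan tangent à $S$ en $(x,F(x))$ est l'image de $\xi\mapsto(\xi,L(x)\xi)$. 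Un calcul à partir de \eqref{eq_cercle_critique} identifie l'équation du cercle critique en $(x,F(x))$ à
\[
\Phi(x,\lambda):=\det_{\mathbb{R}}\bigl(\lambda\cdot\mr{id}_{\mathbb{C}}-L(x)\bigr)=0,
\]
et le fait que $\lambda$ soit critique en l'origine équivaut à la singularité de l'opérateur $A:=\lambda\cdot\mr{id}_{\mathbb{C}}-L(0)$, auquel cas $v$ correspond à un $\xi\in\ker A$.

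Pour les équivalences \ref{lcsse_item_2}$\Leftrightarrow$\ref{lcsse_item_3}$\Leftrightarrow$\ref{lcsse_item_4}, on part d'une courbe $c(t)=(x(t),F(x(t)))$ sur $S$. La fonction $\mu(t):=F(x(t))-\lambda x(t)$ vérifie $\mu'(0)=L(0)x'(0)-\lambda x'(0)=-A(x'(0))$, donc \ref{lcsse_item_3} équivaut à $x'(0)\in\ker A$, qui est exactement \ref{lcsse_item_2} (avec la convention que le cas $C_\lambda$ singulier, c'est-à-dire $d_x\Phi(0,\lambda)=0$, rend la condition automatique). Enfin, \ref{lcsse_item_4} demande que la dérivée en $t=0$ de $\Phi(x(t),\lambda)$ s'annule, ce qui donne $d_x\Phi(0,\lambda)(\xi)=0$ : encore la même condition.

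L'équivalence avec \ref{lcsse_item_1} est le point délicat. Pour $c(t)\subset S$ avec $c'(0)=v$, on a $c''(0)=(x''(0),L(0)x''(0)+Q(\xi))$ où $Q(\xi):=F_{xx}(0)(\xi,\xi)\in\mathbb{C}$ ; la condition $c''(0)\in d$ équivaut alors à l'existence d'un $x''(0)\in\mathbb{R}^2$ tel que $Q(\xi)=A(x''(0))$, soit $Q(\xi)\in\mr{Im}(A)$. Par ailleurs, la formule de Jacobi pour la dérivée du déterminant donne
\[
d_x\Phi(0,\lambda)(\xi)=\mr{tr}\bigl(\mr{adj}(A)\cdot\partial_\xi L(0)\bigr),
\]
avec $\partial_\xi L(0)=F_{xx}(0)(\xi,\cdot)$. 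Comme $A$ est non nulle (sinon $T_0S$ serait complexe) et singulière, elle est de rang un, et $\mr{adj}(A)$ admet pour image $\ker A$ et pour noyau $\mr{Im}(A)$. Le travail restant consiste à vérifier que les conditions $Q(\xi)\in\mr{Im}(A)$ et $\mr{tr}(\mr{adj}(A)\cdot F_{xx}(0)(\xi,\cdot))=0$ coïncident ; en prenant une base adaptée de $\mathbb{R}^2$ avec $\xi=e_1$, elles se réduisent toutes deux à la même équation scalaire portant sur la composante de $Q(\xi)$ transverse à $\mr{Im}(A)$, ce qui établit l'équivalence \ref{lcsse_item_1}$\Leftrightarrow$\ref{lcsse_item_4}.
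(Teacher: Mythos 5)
Your argument is correct, and while it shares the paper's backbone it departs from it on the key equivalence. Like the paper, you reduce \eqref{lcsse_item_2}, \eqref{lcsse_item_3} and \eqref{lcsse_item_4} to the vanishing, along $\xi$, of the differential of a defining equation of the critical circle; your $\Phi(x,\lambda)=\det_{\mathbb{R}}(\lambda\,\mathrm{id}-L(x))$ is, up to a nonvanishing factor, the function $E$ that the paper extracts from $\Omega_S\wedge\Omega_\lambda=E\,dV$, so that part is essentially the same proof written in graph coordinates. The genuine difference is \eqref{lcsse_item_1}: the paper takes a curve $c(t)=vt+kt^2+O(t^3)$ on $S$, notes that the complexified direction $\lambda(t)$ of $c'(t)$ is $\lambda+O(t^2)$ exactly when $k\in d$, and relates this to $dE(v)=0$ through the identity $E(c(t),\lambda(t))=0$; you instead translate \eqref{lcsse_item_1} into the algebraic condition $Q(\xi)\in\mathrm{Im}(A)$ and match it with $d_x\Phi(0,\lambda)(\xi)=0$ by Jacobi's formula, using that the adjugate of the rank-one operator $A$ has kernel $\mathrm{Im}(A)$ and image $\ker A$. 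The verification you leave to the reader does close: in a basis $(\xi,e_2)$ with $Ae_2=\alpha\xi+\gamma e_2$ and $Q(\xi)=m_{11}\xi+m_{21}e_2$, both conditions reduce to $\gamma m_{11}-\alpha m_{21}=0$. Your route has the advantage of making it evident that \eqref{lcsse_item_1} does not depend on the chosen curve (it only involves the second-fundamental-form value $Q(\xi)$ modulo $\mathrm{Im}(A)$), which also cleans up a small looseness in the paper's phrasing: for a fixed curve one only gets $k\in d\Rightarrow dE(v)=0$, the converse requiring an adjustment of the tangential part of the acceleration, i.e. the existential reading of \eqref{lcsse_item_1}. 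The paper's route, in exchange, stays closer to the geometry exploited later: the moving slope $\lambda(t)$ is precisely the datum of the lift $p^*S$, so its computation explains directly why exceptional pairs are the critical points of the projection $\pi$ on $p^*S$ (remarque \ref{rmq_exceptionnel}).
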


\begin{proof}
On peut supposer que $d=\{y=0\}$ et $v=\frac{\partial}{\partial x_1}$.
Supposons pour prouver \eqref{lcsse_item_2} $\Leftrightarrow$ \eqref{lcsse_item_3} que la courbe $C_{\lambda}$ soit régulière.
Considérons une paramétrisation régulière $c(t)=(at,bt) + O(t^2)$ de $C_{\lambda}$.
L'ordonnée à l'origine $\mu(t)$ d'une droite horizontale passant par $c(t)$ vérifie alors $\mu(t)=bt + O(t^2)$ prouvant \eqref{lcsse_item_3} $\Leftrightarrow b=0$.
Mais si $b=0$, alors $(a,0)$ est un vecteur tangent à $S$ ; comme $d\cap T_0S = \langle v \rangle$, on a bien \eqref{lcsse_item_2} $\Leftrightarrow$ \eqref{lcsse_item_3}.

Considérons des champs de 2-plans $\Omega_S$ et $\Omega_{\lambda}$ définissant respectivement un champ de plans tangent à $S$ et le feuilletages des droites complexes de direction $\lambda$.
Alors la fonction $E$ telle que $\Omega_S \wedge \Omega_{\lambda} = E dV$ est une équation de $C_{\lambda}$, et la condition \eqref{lcsse_item_2} est équivalente à $dE(v)=0$.

Considérons une courbe $c(t)=vt+kt^2+O(t^3)$ sur $S$ dans la direction $v$.
Le vecteur $c'(t)=v+2kt+O(t^2)$ est tangent à $S$ en $c(t)$ donc la direction $\lambda(t)$ complexifiée de $c'(t)$ vérifie $\lambda(t) = \lambda + O(t^2)$ si et seulement si $k\in d$.
D'un autre côté, $\lambda(t)=\lambda + O(t^2)$ si et seulement si $dE(v)=0$ donc on obtient \eqref{lcsse_item_2} $\Leftrightarrow$ \eqref{lcsse_item_1}.

Les conditions $dE(v)=0$ et \eqref{lcsse_item_4} sont clairement équivalentes.
\end{proof}

\begin{df}
\label{df_exceptionnel}
Un couple $(p,\lambda)$ tel que $\lambda\in \mathbb{P}^1(\mathbb{C})$ soit une direction critique en $p\in S$ sera appelée \emph{exceptionnel} s'il vérifie l'une des conditions du lemme \ref{lem_exceptionnel}.
\end{df}

\section{Surfaces exceptionnelles}
\label{sec_exceptions}


\subsection{Surfaces totalement exceptionnelles}

On s'intéresse ici aux germes de surfaces $S$ de plan tangent à l'origine non complexe pour lesquelles tout couple $(p,\lambda)$ avec $p\in S$ et $\lambda$ critique est exceptionnel.
On a vu au lemme \ref{lem_exceptionnel} que $(p,\lambda)$ exceptionnel signifie que la courbe $C_{\lambda}$ est tangente au feuilletage des droites de direction $\lambda$.
Ceci implique que $C_{\lambda}$ est incluse dans une unique droite complexe de direction $\lambda$.
On obtient ainsi la proposition suivante.

\begin{prop}
\label{prop_U_0}
Supposons que tout couple $(p,\lambda)$ où $\lambda$ est dans le cercle critique en $p\in S$ soit exceptionnel.
Alors toute droite complexe non transverse à $S$ coupe $S$ le long d'une courbe.
\end{prop}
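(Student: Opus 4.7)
The plan is to prove a local statement: for any complex line $D$ of direction $\lambda$ non-transverse to $S$ at a point $p$, a neighborhood of $p$ in the critical curve $C_\lambda$ is contained in $D$. Since $C_\lambda$ is generically a real $1$-dimensional curve, the inclusion $C_\lambda \cap U \subset D\cap S$ will then give the statement.

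The first step is to reformulate the hypothesis through Lemme \ref{lem_exceptionnel}: under total exceptionality, for every critical couple $(q,\lambda)$, the curve $C_\lambda$ is either singular at $q$ or satisfies $T_q C_\lambda = v_q$, where $v_q$ is the real tangent vector to $S$ at $q$ whose complexification generates the direction $\lambda$. In particular, the smooth locus $C_\lambda^{\mr{reg}}$ is everywhere tangent to the real $2$-dimensional foliation $\F_\lambda$ of $\mathbb{C}^2$ by complex lines of direction $\lambda$.

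Next, I would integrate this tangency. A smooth real $1$-dimensional curve whose velocity lies at every point in the tangent plane of a regular foliation is contained in a single leaf: in any foliation chart, its transverse component is constant along each component. Hence each connected component of $C_\lambda^{\mr{reg}}$ lies in a single leaf of $\F_\lambda$. If $p$ is a smooth point of $C_\lambda$, the component through $p$ lies in the leaf $D$, which is exactly the statement we want, and the intersection $D\cap S$ contains this real curve.

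To close the proof at a singular point $p$ of $C_\lambda$, I would pass to the limit: smooth branches of $C_\lambda$ accumulate on $p$, each lies in some leaf of $\F_\lambda$, and by continuity of $\F_\lambda$ these leaves must converge to the unique leaf $D$ through $p$. So every branch of $C_\lambda$ at $p$ lies in $D$, and $D\cap S$ again contains a curve. The only mild obstacle is controlling the local structure of $C_\lambda$ at singular points — ruling out, for instance, accumulations of isolated components that would prevent the branch/limit argument — but this is automatic in the generic $\mc{C}^2$ setting where $C_\lambda$ is a curve with isolated singularities, and the $\mc{C}^2$ regularity is exactly what is needed to make the foliation-tangency integration rigorous.
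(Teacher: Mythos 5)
Votre démonstration suit essentiellement l'argument du texte : le papier déduit la proposition du lemme \ref{lem_exceptionnel} en observant que l'exceptionnalité signifie que $C_\lambda$ est tangente au feuilletage des droites de direction $\lambda$, donc incluse dans une unique telle droite. Vous prenez la même route, en explicitant simplement l'étape d'intégration de la tangence et le traitement des points singuliers de $C_\lambda$, que le texte laisse implicites.
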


\subsection{Feuilletages affines complexes non transverses}

Au vu de la proposition \ref{prop_U_0}, les surfaces ayant beaucoup de points exceptionnels vont avoir des courbes incluses dans l'intersection entre $S$ et des droites complexes.
On cherche dans cette section à caractériser les surfaces telles que ces courbes exceptionnelles forment des feuilletages.

\begin{thm}
Supposons qu'il existe un feuilletage lisse $\F$ sur $S$ dont toute feuille est incluse dans une droite complexe.
Alors $S$ est incluse dans une hypersurface Levi-plate feuilletée par des droites complexes.
\end{thm}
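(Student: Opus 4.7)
Mon plan consiste à construire explicitement l'hypersurface Levi-plate $M$ comme la réunion des droites complexes contenant les feuilles de $\F$. Localement près d'un point $p_0\in S$, je choisirais une transversale à $\F$ dans $S$ et paramétrerais $S$ par $(s,t)\mapsto \phi(s,t)$, où $s\mapsto \phi(s,t)$ est une paramétrisation régulière de la feuille $L_t$ passant par $\phi(0,t)$. Chaque feuille étant incluse dans une unique droite complexe, celle-ci doit nécessairement être $D_t = \phi(0,t) + \mb{C}\,\partial_s\phi(0,t)$, ce qui fournit une famille lisse à un paramètre de droites affines complexes.

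Je définirais ensuite l'application $\Psi: \mb{R}\times \mb{C}\to \mb{C}^2$ par $\Psi(t,\zeta) = \phi(0,t) + \zeta\,\partial_s\phi(0,t)$, et poserais $M := \mr{Im}(\Psi)$. L'inclusion $S\subset M$ est immédiate car tout point $\phi(s,t)\in S$ appartient à $L_t\subset D_t = \Psi(\{t\}\times \mb{C})$. Pour vérifier que $M$ est localement une hypersurface réelle lisse, il suffit de contrôler le rang de $d\Psi$ au point $(t_0,0)$ : celui-ci vaut $3$ si et seulement si $\partial_t\phi(0,t_0)$ (vecteur tangent à $S$ transverse à $L_{t_0}$) n'est pas contenu dans $D_{t_0}$, c'est-à-dire si $T_{\phi(0,t_0)}S\neq D_{t_0}$. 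Dans ce cas, $M$ est une hypersurface réelle lisse qui, par construction, est la réunion des droites complexes $D_t$, autrement dit feuilletée par des courbes complexes, donc Levi-plate.

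La difficulté principale est le traitement du cas dégénéré où $T_pS = D_t$ en tout point $p$ de $L_t$ pour tout $t$ dans un ouvert : le plan tangent à $S$ étant alors partout complexe, $S$ serait elle-même une courbe complexe et le résultat est trivial puisque toute courbe complexe se plonge dans une hypersurface Levi-plate appropriée. Une vérification accessoire consistera à s'assurer que $\Psi$ est localement injective au voisinage de $S$, afin que $M$ soit bien une hypersurface plongée ; ceci découle de ce que deux feuilles distinctes de $\F$, étant disjointes, engendrent des droites complexes distinctes près d'un point générique.
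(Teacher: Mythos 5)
Votre démonstration est correcte et suit essentiellement la même voie que l'article : dans les deux cas l'hypersurface cherchée est la réunion des droites complexes $D_t$ contenant les feuilles, la seule différence étant la vérification que cette réunion est bien une hypersurface au voisinage de $S$ (l'article observe que les points d'intersection $D_t\cap D_{t'}$ restent loin de $S$, vous calculez le rang de $d\Psi$), deux formulations du même fait qui reposent l'une comme l'autre sur l'hypothèse, implicite dans toute cette section, que le plan tangent à $S$ n'est pas complexe au point considéré. Votre condition de rang rend d'ailleurs cette hypothèse explicite, ce qui est plutôt plus soigné que l'esquisse du texte ; le cas dégénéré restant (tangence complexe) est laissé de côté par l'article exactement comme par vous.
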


\begin{proof}
Les feuilles de $\F$ forment une famille à un paramètre réel de courbes réelles $(F_t)$ ; chaque $F_t$ est incluse dans une droite complexe $D_t$.
Les $F_t$ ne s'intersectant pas sur $S$, la courbe des points d'intersection $D_t\cap D_{t'}$ est loin de $S$.
On voit donc qu'il existe un voisinage $U$ de $S$ dans $\mathbb{C}^2$ tel que les $D_t\cap U$ soient deux à deux disjoints, et donc l'hypersurface $H=U\cap\bigcup D_t$ convient.
\end{proof}

\begin{thm}
\label{thm_2}
Soit $S$ un germe de surface réelle de $\mathbb{P}^2(\mathbb{C})$.
Supposons qu'il existe deux pinceaux de droites $\mc{P}_1,\mc{P}_2$ et deux feuilletages lisses transverses $\F_1,\F_2$ sur $S$ tels que toute feuille de $\F_i$ soit incluse dans une droite du pinceau $\mc{P}_i$.
Alors il existe deux courbes réelles $C_1,C_2\subset \mathbb{C}$ et un automorphisme holomorphe $\varphi$ de $\mathbb{P}^2(\mathbb{C})$ tels que $S\subset\varphi(C_1\times C_2)$ où $C_1\times C_2$ est vu comme une surface réelle de $\mathbb{C}^2$ et $\mathbb{C}^2$ est considéré comme inclus dans $\mathbb{P}^2(\mathbb{C})$.
\end{thm}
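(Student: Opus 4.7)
Mon plan est de normaliser la configuration par un automorphisme projectif, puis de lire la structure produit dans des coordonnées adaptées aux deux feuilletages.

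Je commencerais par observer que les points de base $p_1,p_2$ des pinceaux $\mc{P}_1,\mc{P}_2$ sont nécessairement distincts : sinon, pour tout $q\in S$, les feuilles de $\F_1$ et $\F_2$ passant par $q$ seraient toutes deux incluses dans l'unique droite complexe joignant $p_1=p_2$ à $q$ ; génériquement, leur intersection avec $S$ est une courbe réelle, si bien que les deux feuilles devraient coïncider, contredisant la transversalité. Puisque $\mathrm{PGL}_3(\mathbb{C})$ agit transitivement sur les couples de points distincts de $\mathbb{P}^2(\mathbb{C})$, je choisirais $\varphi\in \mathrm{PGL}_3(\mathbb{C})$ envoyant $[1:0:0]$ sur $p_1$ et $[0:1:0]$ sur $p_2$ ; dans la carte affine $\mathbb{C}^2$ obtenue via $\varphi^{-1}$, le pinceau $\mc{P}_1$ devient la famille des droites horizontales $\{y=\mathrm{cte}\}$ et $\mc{P}_2$ celle des droites verticales $\{x=\mathrm{cte}\}$.

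Ensuite, la transversalité des feuilletages lisses $\F_1,\F_2$ fournit des coordonnées réelles locales $(t_1,t_2)$ sur $S$ telles que $\F_1$ ait pour feuilles $\{t_2=\mathrm{cte}\}$ et $\F_2$ les $\{t_1=\mathrm{cte}\}$. La coordonnée complexe $y$ étant constante le long de chaque feuille de $\F_1$ (qui est incluse dans une horizontale $\{y=c\}$), $y$ ne dépend que de $t_2$ ; symétriquement, $x$ ne dépend que de $t_1$. La paramétrisation de $S$ s'écrit donc $(t_1,t_2)\mapsto (x(t_1),y(t_2))$, d'où $S\subset C_1\times C_2$, avec $C_1$ et $C_2$ les images respectives des fonctions $t_1\mapsto x(t_1)$ et $t_2\mapsto y(t_2)$.

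L'unique point demandant un peu d'attention est de vérifier que $C_1$ et $C_2$ sont effectivement des courbes : si par exemple $x(t_1)$ était constante, alors $S$ serait inclus dans une droite complexe $\{x=c\}$, et les feuilles de $\F_1$ sur $S$, coincées entre cette droite verticale et les droites horizontales $\{y=c'\}$, se réduiraient à des points, ce qui est absurde. On remonte alors par $\varphi$ pour conclure $S\subset \varphi(C_1\times C_2)$ ; c'est la seule subtilité de la preuve, le reste étant essentiellement formel une fois la normalisation obtenue.
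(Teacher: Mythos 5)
Votre preuve est correcte et suit essentiellement la même démarche que celle de l'article : normaliser par un automorphisme de $\mathbb{P}^2(\mathbb{C})$ les centres des pinceaux (envoyés à l'infini, de sorte que $\mc{P}_1,\mc{P}_2$ deviennent les droites horizontales et verticales), puis lire la structure produit $S\subset C_1\times C_2$ via la constance de $x$ et $y$ le long des feuilles. Vos vérifications supplémentaires (distinction des centres, non-dégénérescence des courbes $C_i$) sont des points que l'article passe sous silence, mais elles ne changent pas la nature de l'argument.
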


\begin{proof}
Quitte à appliquer un automorphisme de $\mathbb{P}^2$, on peut supposer que les centres des pinceaux sont aux points de coordonnées $[0:1:0]$ et $[1:0:0]$.
Les pinceaux sont alors $\mc{P}_1=\{x=t\}$ et $\mc{P}_2=\{y=t\}$ ; notons $T_1=\mathbb{C}_x$ et $T_2=\mathbb{C}_y$ les espaces des paramètres de ces pinceaux.
Les droites de ces pinceaux qui intersectent $S$ forment des courbes réelles $C_i\subset T_i$.
Chaque point de $S$ appartient à une droite de chaque pinceau donc définit un point $(t_1,t_2)\in C_1\times C_2\subset T_1\times T_2$.
Le résultat s'ensuit.
\end{proof}

\begin{thm}
\label{thm_3}
Soit $S$ un germe de surface réelle de $\mathbb{P}^2(\mathbb{C})$.
Supposons qu'il existe trois pinceaux de droites $\mc{P}_i$ et trois feuilletages lisses deux à deux transverses $\F_i$ sur $S$ tels que toute feuille de $\F_i$ soit incluse dans une droite du pinceau $\mc{P}_i$.
Alors $S$ est incluse dans un plan affine réel.
\end{thm}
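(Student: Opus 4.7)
The plan is to reduce to Theorem~\ref{thm_2} on two of the pencils and then use the third one to force the two factor curves to be straight. First, I would apply Theorem~\ref{thm_2} to $(\mc{P}_1,\F_1)$ and $(\mc{P}_2,\F_2)$; after composition with an automorphism of $\mb{P}^2(\mb{C})$ one may assume that the first two centers are $[0:1:0]$ and $[1:0:0]$, so that $\mc{P}_1=\{x=t\}$, $\mc{P}_2=\{y=t\}$, and $S\subset C_1\times C_2$ for real curves $C_1,C_2\subset \mb{C}$. Since the transverse $1$-dimensional foliations $\F_1,\F_2$ on the real $2$-dimensional germ $S$ come from the two projections onto the factors, $S$ fills up $C_1\times C_2$ at a generic point.

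Next, using the residual group of automorphisms of $\mb{P}^2(\mb{C})$ fixing $[0:1:0]$ and $[1:0:0]$ (the affine maps $(x,y)\mapsto(ax+b,cy+d)$), I would put the third pencil in normal form. When the three centers are collinear (the principal case, in which the center of $\mc{P}_3$ also lies on the line at infinity), one arranges $\mc{P}_3=\{y-x=t\}$, so the foliation $\F_3$ is given by $\pi_3:=y-x$ being constant on leaves and taking values in a real curve $C_3\subset\mb{C}$. The heart of the argument is then a differential computation at a generic point $(x_0,y_0)\in C_1\times C_2$: the tangent space is $\mb{R} v_1\oplus \mb{R} v_2$ for tangent vectors $v_i\in\mb{C}^*$ to $C_i$, and $d\pi_3$ sends this basis to the pair $(-v_1,v_2)\in\mb{C}^2$. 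In order that $\pi_3(C_1\times C_2)$ lie in the real curve $C_3$, the image of $d\pi_3$ restricted to this tangent space must be a real line of $\mb{C}$, which forces $v_2/v_1\in\mb{R}^*$. Since this relation must hold simultaneously for every tangent $v_1$ of $C_1$ and every $v_2$ of $C_2$, fixing one side and varying the other shows that all tangents to $C_1$ lie in a single $\mb{R}$-line of $\mb{C}$, and similarly for $C_2$ with the same direction. Hence $C_1$ and $C_2$ are each contained in a real affine line of $\mb{C}$, and $C_1\times C_2$ is contained in a real affine $2$-plane of $\mb{C}^2$; in particular so is $S$.

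The main obstacle is the case when the three centers are not collinear: the normalization then brings the center of $\mc{P}_3$ to the origin, and $\pi_3=y/x$. The tangential analysis is now twisted pointwise by a factor involving $(x_0,y_0)$, and leads to a first-order ODE whose solutions are logarithmic spirals rather than straight segments. Ruling out these spiral configurations requires applying Theorem~\ref{thm_2} symmetrically to the three pairs of pencils and combining the resulting product structures, until the compatibility of the constraints forces $C_1$ and $C_2$ to be straight after all.
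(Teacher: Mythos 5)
Your first two paragraphs are correct and handle the case of collinear centers by a route slightly different from the paper's: after the reduction $S\subset C_1\times C_2$ via Theorem~\ref{thm_2}, the paper notes that the three slopes $\lambda_i$ lie in the critical circle at every point of $S$, hence that this circle is constant, and then shows that a nonzero curvature of $C_1$ or $C_2$ would tilt a tangent direction off the real circle; your rank-one condition on $d\pi_3$ restricted to $T(C_1\times C_2)$ reaches the same conclusion at least as directly, and is a perfectly acceptable substitute.

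The genuine gap is your last paragraph, and it cannot be filled as you propose: when the three centers are not collinear, the conclusion of the statement is simply false, so no amount of combining the product structures coming from the three pairs of pencils will force $C_1$ and $C_2$ to be straight. The spiral and circle solutions of your ODE are not an artifact to be ruled out, they are actual examples. Take $\mc{P}_1=\{x=t\}$, $\mc{P}_2=\{y=t\}$, $\mc{P}_3$ the pencil of lines through the origin, and $S=\{|x|=1\}\times\{|y|=1\}$ (or a product of two logarithmic spirals with the same angle): the level curves of $x$, of $y$ and of $y/x$ on $S$ are three smooth pairwise transverse foliations whose leaves lie in lines of the corresponding pencils, yet the germ of $S$ at $(1,1)$ is contained in no real affine plane. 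What your analysis actually uncovers is that the theorem needs the additional hypothesis that the three centers be collinear; this is exactly what the paper's proof assumes (without justification) when it begins by sending all three centers to the line at infinity, a normalization available only for collinear centers — and note that in the application to Corollaire~\ref{cor_R2} the pencils are centered at three generic points of $S$, which are not collinear, so the distinction matters. With the collinearity hypothesis added, your first two paragraphs already constitute a complete proof; without it, the non-collinear configurations (products of concentric circles or of logarithmic spirals of equal angle) survive and must be excluded by other means than Theorem~\ref{thm_3} itself.
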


\begin{proof}
Quitte à appliquer un automorphisme de $\mathbb{P}^2$, on peut supposer que les centres des pinceaux sont à l'infini.
Alors à chaque pinceau correspond une pente $\lambda_i\in \mathbb{P}^1(\mathbb{C})$, et on voit que chaque $\lambda_i$ appartient au cercle critique en tout point $p\in S$.
On en déduit immédiatement que le cercle critique est constant le long de $S$.

On peut supposer $\lambda_1=0, \lambda_2=\infty$ et on a vu au théorème \ref{thm_2} que cela implique $S\subset C_1\times C_2\subset \mathbb{C}^2$.
Pour simplifier, supposons que la courbe $C_1$ ait pour équation $x_2=a_1x_1^2+\ldots$ et que l'équation de $C_2$ soit $y_2=a_2y_1^2 + \ldots$
La surface $S$ a alors pour équations
\[
\left\{ \begin{aligned}
x_2&=a_1x_1^2+\ldots\\
y_2&=a_2y_1^2+\ldots
\end{aligned} \right.
\]
À l'origine le cercle critique est $\mathbb{P}^1(\mathbb{R})$. 
Si l'un des coefficients $a_i$ est non nul (disons par exemple $a_2$), on voit qu'au point $(x,y)=(0,\varepsilon)$, le plan tangent est engendré par $\frac{\partial}{\partial x_1}$ et $\frac{\partial}{\partial y_1}+a_2 \varepsilon_1 \frac{\partial}{\partial y_2}$.
Le vecteur tangent $\frac{\partial}{\partial x_1}+\frac{\partial}{\partial y_1}+a_2 \varepsilon_1 \frac{\partial}{\partial y_2}$ a pour direction complexe $\lambda=1+a_2 \varepsilon_1 i$ qui n'est pas réel, contredisant la constance du cercle critique.

Ainsi les deux courbes $C_1,C_2$ sont de courbure nulle en tout point, donc sont des droites réelles, ce qui conclut la preuve.
\end{proof}

On en déduit le corollaire suivant (Proposition A dans \cite{pohl_ordnungsgeometrie}).

\begin{cor}
\label{cor_R2}
Soit $(S,p_0)$ un germe de surface réelle de $\mathbb{P}^2(\mathbb{C})$ tel que $T_{p_0}S$ ne soit pas complexe et tel que tout couple $(p,\lambda)$ critique soit exceptionnel.
Alors $S$ est incluse dans un plan affine réel.
\end{cor}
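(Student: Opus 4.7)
The plan is to reduce the statement to Theorem~\ref{thm_3} by constructing, in a neighbourhood of $p_0$ in $S$, three pairwise transverse foliations whose leaves lie in three distinct pencils of complex lines. Since $T_{p_0}S$ is not complex, the critical circle $\Gamma_{p_0}$ is a genuine real $1$-dimensional circle in $\mb{P}^1(\mb{C})$ with infinitely many points; by Proposition~\ref{prop_U_0}, every complex line non-transverse to $S$ meets $S$ along a curve, and by construction that curve is tangent at $p_0$ to the real vector $v_\lambda \in T_{p_0}S$ whose complexification is the direction $\lambda$ of the line.

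The key intermediate step will be to show that the critical circle $\Gamma_p$ is actually independent of $p$ near $p_0$ under the exceptional hypothesis. For a fixed $\lambda \in \Gamma_{p_0}$ the constancy is immediate along one direction: the curve $C_\lambda := D_\lambda \cap S$ through $p_0$ is contained in a single complex line of direction $\lambda$, so $\lambda \in \Gamma_{p'}$ for every $p' \in C_\lambda$. To propagate the constancy across the directions transverse to $D_\lambda$, I would exploit condition~(1) of Lemma~\ref{lem_exceptionnel}: every critical pair $(p,\lambda)$ forces the curvature vector of any curve on $S$ through $p$ in direction $v_\lambda$ to lie in the complex line $\mb{C} v_\lambda$. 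Since the map $v\mapsto v_{\mb{C}}$ is a bijection between $\mb{P}(T_pS)$ and $\Gamma_p$, this constraint applies to every tangent direction $v$ at $p$, and translates into a system of linear relations on the second derivatives of a local graph representation of $S$. A direct calculation then shows that these relations, imposed at every $p$ in a neighbourhood of $p_0$ together with their propagation along the curves $C_{\lambda'}$ for $\lambda'$ varying in $\Gamma_{p_0}$, force the Gauss map $p\mapsto T_pS$ to be locally constant, so that $\Gamma_p \equiv \Gamma_{p_0}$.

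Once the critical circle is known to be constant, call it $\Gamma$, I pick three pairwise distinct directions $\lambda_1, \lambda_2, \lambda_3 \in \Gamma$. For each $\lambda_i$ and each $p$ close to $p_0$, the complex line through $p$ of direction $\lambda_i$ is non-transverse to $S$ at $p$, so by the exceptional hypothesis and Proposition~\ref{prop_U_0} it meets $S$ along a curve through $p$. Letting $p$ vary, these curves assemble into a smooth foliation $\F_i$ of a neighbourhood of $p_0$ in $S$ whose leaves lie in the pencil $\mc{P}_i$ of complex lines of direction $\lambda_i$ (with centre at infinity). Because $T_pS$ is not complex, three distinct directions $\lambda_i$ correspond to three distinct real tangent directions at each $p$, so $\F_1, \F_2, \F_3$ are pairwise transverse. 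Theorem~\ref{thm_3} then yields that $S$ is contained in a real affine plane.

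The main obstacle is the constancy of $\Gamma_p$: condition~(4) of Lemma~\ref{lem_exceptionnel} only gives first-order fixity of $\lambda$ in the single direction $v_\lambda$, so turning the pointwise constraints into the global, two-dimensional statement $\Gamma_p \equiv \Gamma_{p_0}$ requires the full second-order information of Lemma~\ref{lem_exceptionnel}(1) applied at every point of a neighbourhood of $p_0$, combined with the propagation provided by the curves $C_\lambda$. Once this constancy is established, the reduction to Theorem~\ref{thm_3} sketched above is routine.
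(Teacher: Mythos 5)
There is a genuine gap, and it is exactly at the step you call the ``key intermediate step''. Everything in your argument hinges on the claim that exceptionality of all critical pairs forces the Gauss map $p\mapsto T_pS$ (hence the critical circle) to be locally constant, and this claim is only asserted (``a direct calculation then shows''), not proved. Note first that this claim \emph{is} the corollary: a locally constant Gauss map already puts $S$ inside a translate of $T_{p_0}S$, so the subsequent construction of three foliations and the appeal to Theorem \ref{thm_3} would be superfluous — you are assuming a statement equivalent to what is to be proved. Second, the ingredients you list do not yield it. The pointwise second-order relations coming from Lemma \ref{lem_exceptionnel}(1) do not kill the second fundamental form: writing $S=\{x_2=Q_1(x_1,y_1)+O(3),\; y_2=Q_2(x_1,y_1)+O(3)\}$ over a totally real tangent plane, exceptionality of \emph{every} critical pair at the origin amounts to the single identity $y_1\,Q_1(x_1,y_1)\equiv x_1\,Q_2(x_1,y_1)$, whose general solution is $Q_1=x_1L$, $Q_2=y_1L$ with $L$ linear; for instance $x_2=x_1^2$, $y_2=x_1y_1$ satisfies all of these relations at the origin although $T_pS$ genuinely moves. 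Likewise, the ``propagation along $C_\lambda$'' only says that the single point $\lambda$ stays on the critical circle along the single curve $C_\lambda$, which is far from constancy of the whole circle on a two-dimensional neighbourhood. So the decisive step would require a real integrability argument exploiting exceptionality at all nearby points simultaneously, and none is given.

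For comparison, the paper's proof avoids any constancy statement. By Proposition \ref{prop_U_0}, through each $p\in S$ there passes a one-parameter family of curves of $S$ contained in complex lines; choosing three generic points $p_1,p_2,p_3\in S$, the curves issued from $p_i$ lie in lines of the pencil with centre $p_i$ and sweep out a neighbourhood of $p_i$ in $S$, so near a common point one obtains three pairwise transverse foliations subordinate to three pencils, and Theorem \ref{thm_3} applies directly; the constancy of the critical circle appears only \emph{inside} the proof of Theorem \ref{thm_3}, as a consequence of having three pencils (three fixed points determine the circle), not as a prerequisite. Your final reduction to Theorem \ref{thm_3} is fine in outline once three such pencils exist, so the proposal can be repaired either by proving the constancy claim honestly (which is essentially the whole corollary) or, as in the paper, by taking the pencils' centres on $S$ itself so that no constancy is needed.
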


\begin{proof}
On peut supposer que le plan tangent n'est complexe pour aucun point de $S$.
Par chaque point $p\in S$ passe alors un pinceau de courbes $d\subset S$ telles que $d$ soit incluse dans une droite affine complexe.
Prenons trois points génériques $p_1,p_2,p_3\in S$ ; ils définissent trois pinceaux de droites $\mc{P}_i$ dans $\mathbb{P}^2(\mathbb{C})$ et trois feuilletages $\mc{F}_i$ sur $S$ que l'on peut utiliser pour appliquer le théorème \ref{thm_3}.
\end{proof}

\section{Variétés semi-legendriennes}
\label{sec_semi_legendrienne}

Considérons l'espace $\mathbb{C}^3$ muni de coordonnées holomorphes $(x,y,\lambda)$ et d'une forme de contact $\omega := dy - \lambda dx$ ; on notera $\mc{P}$ le champ de 2-plans complexes associé et $p(x,y,\lambda)=(x,y)$ la projection vers $\mathbb{C}^2$.

On cherche à définir des sous-variétés réelles $M\subset \mathbb{C}^3$ équivalentes aux courbes legendriennes, à étudier les projections $p(M)$ pour ces sous-variétés, et à définir des relevés $p^*N$ pour toutes les sous-variétés $N$ de $\mathbb{C}^2$.
De manière assez surprenante, on peut donner des définitions indépendantes de la dimension. 

\begin{df}
Une sous-variété réelle $M$ de dimension 2 ou 3 et de régularité $\mc{C}^1$ dans $(\mathbb{C}^3,\omega)$ sera dite \emph{semi-legendrienne} si en tout point lisse $w\in M$, on a $\mr{dim}(\mc{P}\cap T_wM)\geq 2$.
\end{df}

\begin{df}
Si $N$ est une sous-variété réelle lisse de $\mathbb{C}^2$ de dimension $\leq 3$ et de régularité $\mc{C}^1$, on définit le relevé $p^*N$ de $N$ par la propriété que $p^*N\cap p^{-1}(z)$ soit l'ensemble des directions $\lambda$ des droites complexes $D$ passant par $z\in N$ telles que $D+T_zN\neq \mathbb{C}^2$.
\end{df}

\subsection{Hypersurfaces}

Soit $H$ une hypersurface réelle de $\mathbb{C}^2$ de régularité $\mc{C}^1$ telle que la pente $\lambda(z)$ de l'unique droite complexe $T^{1,0}_zH$ incluse dans $T_zH$ ne soit pas verticale.
On voit que la variété $p^*H\subset \mathbb{C}^3$ est de régularité $\mc{C}^0$ et définie par
\[
p^*H = \{(z,\lambda(z)), z\in H\}.
\]

\begin{lem}
\label{lem_hyp+legendrien_1}
Soit $M$ un germe de sous-variété semi-legendrienne en $(x,y,\lambda)$, lisse, de régularité $\mc{C}^1$ et transverse à la projection $p: \mathbb{C}^3 \rightarrow \mathbb{C}^2$.
Alors $p(M)$ est un germe d'hypersurface lisse de régularité $\mc{C}^1$ de $\mathbb{C}^2$ et $T^{1,0}_{(x,y)}p(M)$ a pour direction complexe $\lambda$.
Ainsi $p^*(p(M))=M$.
\end{lem}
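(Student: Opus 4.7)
Je commencerais par observer que pour que $p(M)$ soit une hypersurface, $M$ doit être de dimension réelle $3$. La transversalité à $p$ signifie alors que $dp_w$ est injective sur $T_wM$ ; comme $p|_M$ est une immersion $\mc{C}^1$ entre variétés réelles de même dimension, c'est un difféomorphisme local $\mc{C}^1$, si bien que $p(M)$ est un germe d'hypersurface réelle lisse de régularité $\mc{C}^1$.

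Le cœur de la preuve consiste à montrer que la direction complexe de $T^{1,0}_{p(w)}p(M)$ vaut $\lambda$. Un calcul direct à partir de $\omega = dy - \lambda dx$ donne
\[
dp_w(\mc{P}_w) \;=\; \mb{C}\langle \partial_x + \lambda\partial_y\rangle,
\]
que l'on note $L$ : en effet un vecteur $a\partial_x+b\partial_y+c\partial_\lambda$ est dans $\mc{P}_w$ si et seulement si $b=\lambda a$, et sa projection s'écrit alors $a(\partial_x+\lambda\partial_y)$. C'est donc une droite complexe, de dimension réelle $2$. La condition semi-legendrienne fournit $\mr{dim}_{\mb{R}}(\mc{P}_w\cap T_wM)\geq 2$, et puisque $dp_w$ est injective sur $T_wM$, l'image $dp_w(\mc{P}_w\cap T_wM)$ est un sous-espace réel de dimension au moins $2$ de $L$. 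Par égalité des dimensions $dp_w(\mc{P}_w\cap T_wM) = L$, et en particulier $L\subset T_{p(w)}p(M)$. Étant une droite complexe contenue dans le plan tangent réel d'une hypersurface (donc de dimension réelle impaire $3$), $L$ coïncide nécessairement avec l'unique droite complexe $T^{1,0}_{p(w)}p(M)$.

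Il reste à en déduire $p^*(p(M))=M$. Par définition, $p^*(p(M))$ est l'ensemble des $(z,\mu(z))$ pour $z\in p(M)$, où $\mu(z)$ désigne la pente de $T^{1,0}_zp(M)$. Pour chaque $w=(x,y,\lambda)\in M$, on vient de voir que $\mu(p(w))=\lambda$, donc $(p(w),\mu(p(w)))=w$. L'application $w\mapsto(p(w),\mu(p(w)))$ est ainsi l'identité de $M$ dans $p^*(p(M))$, d'où l'égalité cherchée.

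La seule vraie subtilité est le calcul de dimensions ci-dessus, qui repose simultanément sur les deux hypothèses : la transversalité garantit l'injectivité de $dp_w$ sur $T_wM$ (et en particulier sur $\mc{P}_w\cap T_wM$), tandis que la condition semi-legendrienne assure que l'image de $\mc{P}_w\cap T_wM$ remplit toute la droite $L$. Sans l'une ou l'autre, on n'obtiendrait qu'une inclusion et la pente $\mu(p(w))$ pourrait différer de $\lambda$.
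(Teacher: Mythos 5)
Votre preuve est correcte et suit essentiellement la même démarche que celle de l'article : la transversalité jointe à la condition semi-legendrienne montre que $T_wM\cap\mc{P}_w$ se projette sur la droite complexe de pente $\lambda$, qui est donc $T^{1,0}_{p(w)}p(M)$, d'où $p^*(p(M))=M$. La seule différence est cosmétique : vous calculez $dp_w(\mc{P}_w)$ directement en coordonnées, là où l'article fait passer le même argument par une courbe holomorphe legendrienne auxiliaire transverse à $p$.
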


\begin{proof}
Notons $z=(x,y)$ et $w=(z,\lambda)$.
La fibre $F=p^{-1}(z)$ est incluse dans le plan $\mc{P}_{w}$ donc si $M$ est transverse à la projection, on a $F\oplus (TM\cap \mc{P}) = \mc{P}$ au point $w$.
Ainsi, si $C$ est une courbe holomorphe legendrienne passant par $w$, transverse à la projection, les deux plans $T_{w}C$ et $T_{w}M\cap \mc{P}$ ont la même image par $p$.
Comme $p(T_{w}C)$ est complexe de pente $\lambda$, le résultat est prouvé.
\end{proof}

\begin{lem}
Si $(H,0)$ est un germe d'hypersurface réelle lisse de régularité $\mc{C}^2$ dans $\mathbb{C}^2$, alors $p^*H$ est un germe de sous-variété semi-legendrienne lisse de régularité $\mc{C}^1$ dans $(\mathbb{C}^3,\omega)$, transverse à la projection $p$.
De plus, $p(p^*H) = H$.
\end{lem}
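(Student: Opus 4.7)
La stratégie est de décrire explicitement $p^*H$ comme le graphe d'une fonction $\lambda:H\to\mathbb{C}$ au-dessus de $H$, puis de vérifier séparément chacune des assertions. Quitte à appliquer un changement linéaire de coordonnées holomorphes sur $\mathbb{C}^2$, on se ramène au cas où $\lambda(0)$ n'est pas la direction verticale, ce qui permet d'utiliser la description $p^*H=\{(z,\lambda(z)),\, z\in H\}$ rappelée juste avant l'énoncé. En écrivant $H=\{f=0\}$ pour une équation réelle $f$ de classe $\mathcal{C}^2$, la pente $\lambda(z)$ s'obtient comme $\lambda(z)=-f_x(z)/f_y(z)$, où $f_x,f_y$ sont les dérivées de Wirtinger de $f$ dans les coordonnées complexes $(x,y)$.

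Pour la régularité, comme $f\in\mathcal{C}^2$ on a $f_x,f_y\in\mathcal{C}^1$, donc $\lambda\in\mathcal{C}^1$ sur $H$. Ainsi $p^*H$ est un graphe $\mathcal{C}^1$ au-dessus de la sous-variété $\mathcal{C}^2$ $H$ et définit une sous-variété réelle $\mathcal{C}^1$ de dimension 3 de $\mathbb{C}^3$, et l'égalité $p(p^*H)=H$ est immédiate à partir de cette description. La restriction $p|_{p^*H}$ étant par construction l'inverse de $z\mapsto(z,\lambda(z))$, c'est un $\mathcal{C}^1$-difféomorphisme ; en particulier $T_w p^*H\cap\ker d_wp=0$ en tout point $w$, ce qui donne la transversalité à $p$ au sens du lemme \ref{lem_hyp+legendrien_1}.

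Le point central est la condition semi-legendrienne. Fixons $w=(z_0,\lambda(z_0))$. Via l'identification en graphe, $T_w p^*H$ s'identifie aux vecteurs $(v,d\lambda_{z_0}(v))$ avec $v\in T_{z_0}H$, et le plan de contact $\mathcal{P}_w=\ker\omega_w$ est caractérisé par la condition $v_y=\lambda(z_0)v_x$, c'est-à-dire par l'appartenance de $v$ à la droite complexe $D_{\lambda(z_0)}\subset T_{z_0}\mathbb{C}^2$ de pente $\lambda(z_0)$. L'intersection $T_w p^*H\cap\mathcal{P}_w$ s'identifie donc à $T_{z_0}H\cap D_{\lambda(z_0)}$, qui, par définition même de $\lambda(z_0)$, coïncide avec la droite tangente complexe $T^{1,0}_{z_0}H$, de dimension réelle 2. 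On obtient ainsi $\dim(T_w p^*H\cap \mathcal{P}_w)=2$, ce qui conclut. Il n'y a pas de véritable obstacle dans cette preuve ; la seule étape où l'hypothèse $\mathcal{C}^2$ de l'énoncé est exploitée est le passage, via les dérivées de Wirtinger de $f$, de $H\in\mathcal{C}^2$ à $\lambda\in\mathcal{C}^1$, qui perd exactement un degré de régularité.
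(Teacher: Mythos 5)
Votre démonstration est correcte et suit essentiellement la même démarche que celle du papier : description de $p^*H$ comme graphe $z\mapsto(z,\lambda(z))$ avec $\lambda$ de classe $\mathcal{C}^1$ (d'où la régularité, la transversalité et $p(p^*H)=H$), puis vérification du caractère semi-legendrien en constatant que les directions de la tangente complexe $T^{1,0}_{z}H$ se relèvent dans le plan de contact. La seule différence est de présentation : le papier relève deux courbes explicites tangentes à $T^{1,0}_0H$, tandis que vous faites le calcul directement sur les espaces tangents via le graphe et la formule de Wirtinger pour $\lambda$, ce qui revient au même argument.
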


\begin{proof}
Que $p^*H$ soit lisse et transverse à $p$ est évident puisque $\lambda(z)$ est une fonction $\mc{C}^1$ ; il suffit donc de montrer que $p^*H$ est semi-legendrienne.
Sans perte de généralité, on peut supposer que $\lambda(0)=0$ ; on peut donc trouver deux courbes réelles $c_1(t),c_2(t)$ sur $H$ passant par l'origine avec une dérivée égale respectivement à $\frac{\partial}{\partial x_1}$ et $\frac{\partial}{\partial x_2}$.
La courbe $c_1$ admet le développement $c_1(t) = (t,0) + O(t^2)$ donc son relevé $c_1^*$ sur $p^*H$ admet un développement $c_1^*(t)=(t,0,l_1t)+O(t^2)$.
Le vecteur $T_0c_1^* = (1,0,l_1)$ est tangent à $p^*H$ et contenu dans $\mc{P}$.
De la même manière, le vecteur $T_0c_2^* = (i,0,l_2)$ donne un deuxième vecteur tangent à $p^*H$, transverse au premier et contenu dans $\mc{P}$, prouvant le caractère semi-legendrien de $p^*H$.
L'égalité $p(p^*H)=H$ est immédiate.
\end{proof}

\subsection{Surfaces}

Considérons une surface réelle $S\subset \mathbb{C}^2$ de régularité $\mc{C}^1$.
Dans ce cas la variété réelle $p^*S\subset \mathbb{C}^3$ de régularité $\mc{C}^0$ est définie par le fait que $p^*S\cap p^{-1}(z)$ soit le cercle critique au dessus de $z\in S$.

\begin{lem}
\label{lem_surf+legendrien_1}
Soit $M$ un germe de sous-variété semi-legendrienne lisse en $(x,y,\lambda)$.
Supposons qu'il existe un feuilletage lisse $\F$ par courbes sur $M$ tel que chaque feuille de $\F$ soit contenue dans une fibre de $p$ et que la projection $p$ soit transverse à l'espace des feuilles. 
Alors $p(M)$ est un germe de surface lisse de $\mathbb{C}^2$ et $\lambda$ est une direction critique en $(x,y)$.
Ainsi $M\subset p^*(p(M))$.
\end{lem}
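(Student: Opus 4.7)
Notons $w=(x,y,\lambda)$, $z=p(w)$ et posons $V:=\mc{P}_w\cap T_wM$ ; l'hypothèse semi-legendrienne donne $\dim_{\mb{R}} V\geq 2$. On observe d'abord que $\mc{P}=\ker\omega$ contient la direction verticale $\ker(dp)$ engendrée sur $\mb{R}$ par $\partial/\partial\lambda_1,\partial/\partial\lambda_2$, et que $dp(\mc{P}_w)$ est exactement la droite complexe $D_\lambda$ de pente $\lambda$. Comme chaque feuille de $\F$ est contenue dans une fibre de $p$, le sous-espace tangent $T^\F_wM$ est inclus dans $\ker(dp)\cap T_wM\subset V$.

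Je commencerai par la première assertion. L'hypothèse de transversalité entre $p$ et l'espace des feuilles se traduit par l'égalité $T_wM\cap \ker(dp)=T^\F_wM$, ce qui équivaut à dire que $p$ passe au quotient en une immersion de l'espace des feuilles $M/\F$ (variété réelle lisse de dimension 2) dans $\mb{C}^2$, d'image $p(M)$. On en déduit que $p(M)$ est un germe de surface réelle lisse de régularité $\mc{C}^1$.

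Pour voir que $\lambda$ est critique en $z$, j'appliquerai le théorème du rang à la restriction $dp|_V$ : son noyau vaut $V\cap\ker(dp)=T_wM\cap\ker(dp)=T^\F_wM$, de dimension réelle 1, donc $\dim_{\mb{R}} dp(V)\geq \dim_{\mb{R}} V - 1\geq 1$. Or $dp(V)\subset dp(\mc{P}_w)=D_\lambda$ et $dp(V)\subset T_zp(M)$, si bien que $D_\lambda\cap T_zp(M)$ est de dimension réelle au moins 1. Ceci signifie exactement $D_\lambda+T_zp(M)\neq\mb{C}^2$, c'est-à-dire que $\lambda$ est une direction critique en $z$. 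En appliquant ce raisonnement en chaque point $w\in M$, on obtient $(z,\lambda)\in p^*(p(M))$, d'où l'inclusion $M\subset p^*(p(M))$.

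Le point qui me paraît le plus délicat est la traduction précise de l'hypothèse « $p$ transverse à l'espace des feuilles » par l'égalité $T_wM\cap\ker(dp)=T^\F_wM$ ; une fois cette égalité acquise, la semi-legendrianité force mécaniquement la criticité de $\lambda$ par le simple calcul de rang ci-dessus.
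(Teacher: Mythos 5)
Votre preuve est correcte et suit essentiellement la même démarche que l'article : dans les deux cas, le point clé est qu'un vecteur de $\mc{P}_w\cap T_wM$ non tangent à $\F$ se projette sur un vecteur non nul de $D_\lambda\cap T_z p(M)$ (où $D_\lambda$ désigne la droite complexe de pente $\lambda$), ce qui donne la non-transversalité, la surjectivité de la projection sur le plan tangent de $p(M)$ étant assurée par la transversalité à l'espace des feuilles. La seule différence, cosmétique, est que l'article réalise $D_\lambda$ comme tangente de la projection d'une courbe holomorphe legendrienne auxiliaire passant par $w$, tandis que vous calculez directement $dp(\mc{P}_w)=D_\lambda$ et concluez par un argument de rang.
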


\begin{proof}
D'après les hypothèses, la dimension chute par la projection et $p(M)$ est un germe de surface lisse de $\mathbb{C}^2$.
Écrivons $z=(x,y)$ et $w=(x,y,\lambda)$ pour simplifier.
Considérons un vecteur $v\in T_wM$ non tangent à $\F$ et une courbe holomorphe legendrienne $C$ passant par $w$ dans la direction $v$.
Alors $C$ est transverse à $p$ ; le vecteur $p_*(v)$ appartient à $T_zp(C)\cap T_zp(M)$ et la pente complexe de $p(C)$ en $z$ est $\lambda$.
Le résultat s'ensuit.
\end{proof}

\begin{lem}
Soit $(S,0)$ un germe de surface réelle lisse de régularité $\mc{C}^2$ dans $\mathbb{C}^2$.
Supposons que $T_0S$ ne soit pas une direction complexe, alors $p^*S$ est un germe de sous-variété semi-legendrienne lisse de $(\mathbb{C}^3,\omega)$, et la famille $\F$ des cercles critique donne un feuilletage lisse dont l'espace des feuilles est transverse à la projection $p$.
De plus, $p(p^*S)=S$.
\end{lem}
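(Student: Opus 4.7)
The plan is to verify four things in order: (i) $p^*S$ is a $\mathcal{C}^1$-submanifold of $\mathbb{C}^3$ of real dimension $3$; (ii) the critical circles at points of $S$ form the leaves of a $\mathcal{C}^1$-foliation $\F$ of $p^*S$; (iii) the leaf space of $\F$ is transverse to $p$ in the sense of Lemma \ref{lem_surf+legendrien_1}; (iv) $p^*S$ is semi-legendrienne. The equality $p(p^*S)=S$ will follow tautologically.

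I would first pick local $\mathcal{C}^2$ defining equations $f_1=f_2=0$ for $S$ near $0$ and set $E(z,\lambda)$ to be the $\mathcal{C}^1$ function characterised by $df_1\wedge df_2\wedge \Omega_\lambda = E\,dV$, following \eqref{eq_cercle_critique}. Since $T_0S$ is not complex, \eqref{eq_non_transverse} says that the critical circle at $0$ is a genuine real circle; hence $d_\lambda E(0,\lambda_0)\neq 0$ at every $\lambda_0$ on it, and by continuity the same holds nearby. This immediately gives (i): $df_1,df_2,dE$ are then linearly independent, so $p^*S=\{f_1=f_2=E=0\}$ is a $\mathcal{C}^1$-submanifold of real dimension $3$. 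Items (ii) and (iii) follow at once: $p|_{p^*S}$ is a $\mathcal{C}^1$-submersion onto $S$ with the critical circles as fibres, so these foliate $p^*S$; the leaf space is locally $S$, and $p$ realises it as the inclusion $S\hookrightarrow \mathbb{C}^2$, which is an immersion.

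The substantive part is (iv). Fix $w_0=(z_0,\lambda_0)\in p^*S$; I must produce two $\mathbb{R}$-independent vectors of $T_{w_0}(p^*S)$ lying in $\mathcal{P}_{w_0}=\ker\omega$. The tangent to the critical circle at $w_0$ furnishes a first: it is contained in the fibre $p^{-1}(z_0)$, on which $dx$ and $dy$ both vanish, hence so does $\omega$. For a second, transverse vector, $\lambda_0$ being critical at $z_0$ provides a nonzero $v\in T_{z_0}S$ of complex direction $\lambda_0$, i.e., $dy(v)=\lambda_0 dx(v)$. Taking a $\mathcal{C}^2$-curve $c(t)$ in $S$ with $c'(0)=v$ and any $\mathcal{C}^1$ section $\lambda(t)$ of the critical circle above $c(t)$ with $\lambda(0)=\lambda_0$ (which exists because $d_\lambda E\neq 0$), the curve $\tilde c(t):=(c(t),\lambda(t))$ lies on $p^*S$ and satisfies
\[
\omega(\tilde c'(0)) = dy(v)-\lambda_0 dx(v) = 0,
\]
independently of $\lambda'(0)$. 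Since $p_*(\tilde c'(0))=v\neq 0$, this vector is $\mathbb{R}$-independent of the fibre tangent, completing (iv).

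The main point requiring attention is (i), namely regularity of $p^*S$ at every point of the initial critical circle $p^{-1}(0)\cap p^*S$. This is precisely where the hypothesis that $T_0S$ is not complex enters, via the explicit form \eqref{eq_non_transverse}.
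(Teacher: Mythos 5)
Your proposal is correct and follows essentially the same route as the paper: smoothness of $p^*S$ and of the foliation by critical circles from the hypothesis that $T_0S$ is not complex (you make this explicit via the equation $E$ and $d_\lambda E\neq 0$, which the paper leaves implicit), then the semi-legendrian property by exhibiting exactly the same two vectors in $\mc{P}$, namely the tangent to the critical circle and the tangent $(v,\lambda'(0))$ to a lifted curve $c^*(t)=(c(t),\lambda(t))$ with $c'(0)=v$ of complex direction $\lambda_0$. No gap to report.
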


\begin{proof}
Si $T_0S$ n'est pas complexe, alors le plan tangent $TS$ n'est pas complexe dans tout un voisinage de l'origine.
On en déduit tout de suite que $p^*S$ est lisse de dimension 3, que le feuilletage $\F$ des cercles critique est lisse et que l'espace des feuilles est transverse à $p$.

Supposons que $\lambda=0$ soit une direction critique à l'origine.
Soit $v\in T_0S$ la direction tangente dont le complexifié est $\lambda=0$ et $c(t)$ une courbe tangente à $S$ passant par l'origine dans la direction $v$.
Considérons une famille $\mc{C}^1$ de directions critiques $\lambda(t)$ en $c(t)$ telle que $\lambda(0)=0$, et $c^*(t) := (c(t),\lambda(t))\in p^*S$ le relevé correspondant.
Alors $c^*(t) = (vt,0,\lambda'(0)t) + O(t^2)$ et le 2-plan complexe $\mc{P}_0$ contient les deux vecteurs $T_0\F$ et $(v,0,\lambda'(0))$ tangents à $p^*S$.
On en déduit que $p^*S$ est bien semi-legendrien.
La propriété $p(p^*S)=S$ est une conséquence immédiate des définitions.
\end{proof}

\begin{prop}
Soit $M$ une sous-variété semi-legendrienne de dimension 2 dans $\mathbb{C}^3$, alors $M$ est une courbe holomorphe legendrienne.
\end{prop}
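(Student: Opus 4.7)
Puisque $M$ est semi-legendrienne de dimension réelle $2$, la condition $\dim(\mc{P}\cap T_wM)\geq 2$ force en fait $T_wM\subset \mc{P}_w$ en tout point lisse $w$, autrement dit $\omega|_M=0$. Le plan est de montrer qu'alors $T_wM$ est $J$-stable, donc que $M$ est une courbe complexe ; étant tangente au champ $\mc{P}$, elle sera automatiquement legendrienne au sens classique.

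Je paramétrerais $M$ localement par $(u,v)\mapsto (x(u,v), y(u,v), \lambda(u,v))$ ; la condition $\omega|_M=0$ se traduit alors par le système $y_u=\lambda x_u$, $y_v=\lambda x_v$. La discussion se ferait ensuite selon la position $\mb{R}$-linéaire des deux nombres complexes $x_u$ et $x_v$.

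Dans le cas générique où $x_u, x_v$ sont $\mb{R}$-linéairement indépendants, j'utiliserais $(x_1, x_2):=(\mr{Re}\, x, \mr{Im}\, x)$ comme paramètres locaux sur $M$. Les équations de contact se réécrivent alors $y_{x_1}=\lambda$ et $y_{x_2}=i\lambda$, et l'égalité des dérivées croisées $y_{x_1 x_2}=y_{x_2 x_1}$ fournit immédiatement $\lambda_{x_2}=i\lambda_{x_1}$, soit l'équation de Cauchy-Riemann pour $\lambda$ en la variable $x=x_1+ix_2$. Il en découle que $y$ est également holomorphe en $x$, et $M$ s'identifie au graphe legendrien holomorphe $x\mapsto (x, y(x), \lambda(x))$.

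Le point délicat est le cas dégénéré où $x_u, x_v$ sont $\mb{R}$-liés ; quitte à reparamétrer on se ramène à $x_v=0$. Si $x_u=0$ également, alors $x$ (et par la contrainte de contact $y$) est constant sur $M$, donc $M$ coïncide avec la fibre verticale $\{x=x_0, y=y_0\}$, qui est elle-même une droite complexe legendrienne. Sinon $x_u\neq 0$ ; alors $y_v=\lambda x_v=0$ mais $y_{uv}=\partial_v(\lambda x_u)=\lambda_v x_u$ (puisque $x_{uv}=0$), ce qui force $\lambda_v=0$ et donc l'annulation de $\partial_v$, contredisant $\dim_{\mb{R}} M=2$. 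L'obstacle principal se situe précisément là : il faut reconnaître que la fibre verticale constitue une exception admissible (et legendrienne), tout en éliminant par compatibilité des dérivées toutes les autres configurations non holomorphes susceptibles d'apparaître.
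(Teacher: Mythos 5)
Votre démonstration est correcte sur le fond et suit le même squelette que celle du texte : puisque $\dim T_wM=2$, la condition semi-legendrienne signifie $\omega|_M=0$, puis on discute selon le rang de $dx$ (c'est-à-dire de $p_*$) en restriction à $TM$ — rang nul : $M$ est la fibre verticale, qui est une droite legendrienne ; rang 1 : impossible ; rang 2 : $M$ est le relevé d'une courbe holomorphe. La différence est dans l'exécution du cas principal : le texte argumente de façon synthétique (toute courbe tracée sur $M$ est l'unique relevé de sa projection, donc tous les vecteurs de $T_wM$ se projettent dans la même direction complexe $\lambda$ et $T_wM$ est une droite complexe), tandis que vous écrivez le système $y_{x_1}=\lambda$, $y_{x_2}=i\lambda$ et en tirez Cauchy--Riemann, ce qui a l'avantage d'exhiber explicitement $M$ comme le graphe legendrien holomorphe $x\mapsto(x,y(x),y'(x))$. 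Deux réserves mineures. D'abord, la définition ne suppose $M$ que de classe $\mc{C}^1$, alors que votre échange de dérivées croisées (et le calcul $\lambda_v=0$ du cas dégénéré) demande du $\mc{C}^2$ ; la réserve se lève sans dériver $\lambda$ : le système du premier ordre donne directement $\partial y/\partial\bar x=\tfrac12(y_{x_1}+iy_{x_2})=\tfrac12(\lambda+i\cdot i\lambda)=0$, donc $y$ est holomorphe et $\lambda=\partial y/\partial x$ l'est automatiquement ; de même le cas de rang 1 s'exclut ponctuellement, comme dans le texte, en remarquant qu'en un point où $\lambda$ diffère de la pente complexifiée de la courbe projetée, $\omega$ ne s'annule pas sur $T_wM$. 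Ensuite, votre dichotomie suppose tacitement que la dépendance $\mathbb{R}$-linéaire de $x_u,x_v$ vaut identiquement sur un ouvert (pour pouvoir redresser $x_v\equiv 0$) ; c'est inoffensif, car l'holomorphie obtenue sur l'ouvert où le rang est 2 se propage aux points frontières par continuité du plan tangent, mais cela mérite d'être dit — le texte présente d'ailleurs la même imprécision.
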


\begin{proof}
On remarque qu'une courbe $c(t)$ dans $\mathbb{C}^2$ a un unique relevé $w(t)=(c(t),\lambda(t))$ où $\lambda(t)$ est le complexifié de $c'(t)$.
Si $M$ est contenue dans une fibre de $p$ elle est holomorphe ; si $p$ définit une fibration de rang générique 1 sur $M$, alors $p(M)$ est une courbe $c(t)$, mais alors au voisinage d'une point $w=(c(t),\lambda)$ avec $\lambda\neq \lambda(t)$, on a $T_wM\not\subset \mc{P}$.
Finalement, si $M$ est transverse à $p$ en $w$, prenons deux vecteurs tangents $v_1,v_2$ et deux courbes $w_i(t)$ tangentes à $v_i$ en $w$.
Chaque $w_i$ est égale au relevé de $p(w_i)$, donc $\lambda_i(t)$ est le complexifié de $p(w_i)'(t)$.
On en déduit que $v_1$ et $v_2$ ont le même complexifié donc que $T_wM$ est une droite complexe.
\end{proof}

\subsection{Courbes et points}

Si la variété réelle $N$ est une courbe ou un point dans $\mathbb{C}^2$, la variété $p^*N$ est le fibré trivial : 
\[
p^*N := p^{-1}(N).
\]

Réciproquement, on voit immédiatement que si les fibres de la projection $p$ sont tangentes à une sous-variété semi-legendrienne $M$, alors $p(M)$ est une courbe réelle ou un point dans $\mathbb{C}^2$.
Les égalités $p(p^*N)=N$ et $p^*(p(M))=M$ sont triviales dans ce contexte.

\subsection{Situation globale}
\label{sec_stratifications}

Dans le contexte global, on remplace $\mathbb{C}^2$ par $\mathbb{P}^2(\mathbb{C})$ et $\mathbb{C}^3$ par le projectivisé du fibré tangent $\mathbb{P}(T \mathbb{P}^2(\mathbb{C}))$.
Ceci permet en particulier d'éviter les problèmes de directions verticales.

Si on cherche à obtenir un résultat global pour unifier les résultats des sous-sections précédentes, il devient nécessaire de gérer les points singuliers et les points critiques de la projection $p$.

Les définitions sont faites de telle sorte que si l'ensemble singulier $S$ d'une sous-variété $N\subset \mathbb{P}^2(\mathbb{C})$ est de dimension au plus 1, alors le relevé $p^*S$ contient toute limite de points génériques de $p^*N$, donc ces points ne posent aucun problème.
Cependant, si $S$ est de dimension 2, il se peut que les limites de points de $p^*N$ constituent un ensemble de dimension 4 dans $\mathbb{P}(T \mathbb{P}^2)$.

Il semble donc naturel de se restreindre au contexte où $N$ admet une stratification de Whitney.
La notion correspondante dans $\mathbb{P}(T \mathbb{P}^2)$ est celle de sous-variétés $M$ admettant des stratifications dont les strates de grande dimension sont semi-legendriennes.
Cependant, sans hypothèses de régularité supérieure, la projection par $p$ d'un espace admettant une stratification de Whitney n'admet pas en général de stratification de Whitney ; elle n'a même aucune raison d'être localement finie.

Il semble donc que le contexte dans lequel la dualité globale soit naturelle reste encore à trouver, mais énonçons déjà les résultats globaux que l'on peut obtenir en l'état, ne serait-ce que pour unifier les sections précédentes.

Commençons par rappeler les définitions pour éviter les confusions.
Si $X$ est une variété $\mc{C}^\infty$ et $Y\subset X$ une sous-variété singulière de régularité $\mc{C}^\mu$, on appelle \emph{stratification} de $Y$ une décomposition disjointe $Y=\cup Y_i$ où chaque $Y_i$ est une sous-variété $\mc{C}^\mu$ lisse (les strates) de sorte que que chaque point $y\in Y$ admette un voisinage qui ne rencontre qu'un nombre fini de strates et que si $Y_i \cap \overline{Y_j}\neq \emptyset$, alors $\mr{dim}(Y_i) < \mr{dim}(Y_j)$ (dans ce cas on écrira $Y_i < Y_j$).
Introduisons la condition suivante sur les stratifications :
\begin{equation}
\begin{aligned}
&\text{Pour toutes strates $Y_i<Y_j$, et toute suite de points $y_n\in Y_j$ telle que}\\ &\quad\text{ $y_n \to y_\infty\in Y_i$, alors $T_{y_n}Y_j$ converge et sa limite contient $T_{y_\infty}Y_i$.}
\end{aligned}\tag{C}
\end{equation}

On remarque que la condition $(C)$ implique évidemment la condition $(A)$ de Whitney.
Si $N$ admet une stratification $N=\cup N_i$, on notera $p^*N = \cup p^*N_i$.
Cette définition dépend du choix de la stratification.

\begin{thm}
\label{thm_stratification_1}
Soit $N$ une sous-variété réelle de dimension inférieure ou égale à 3 dans $\mathbb{P}^2(\mathbb{C})$, de régularité $\mc{C}^2$.
Supposons que $N$ admette une stratification $N=\cup N_i$ vérifiant la condition (C).
Alors $p^*N$ est une sous-variété de régularité $\mc{C}^1$ dans $\mathbb{P}(T \mathbb{P}^2(\mathbb{C}))$ et admet une stratification dont les strates de dimension 3 sont semi-legendriennes.
De plus, $p(p^*N)=N$.
\end{thm}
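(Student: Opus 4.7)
Le plan est d'analyser $p^*N$ strate par strate en utilisant les lemmes locaux des sous-sections précédentes sur les hypersurfaces, surfaces, courbes et points, puis de vérifier grâce à la condition (C) que la décomposition $p^*N = \bigcup p^*N_i$ (éventuellement après un léger raffinement) constitue bien une stratification dont les strates de dimension 3 sont semi-legendriennes, le reste de l'énoncé en découlant presque immédiatement.

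Je traiterais d'abord chaque strate $N_i$ isolément. Si $\dim N_i = 3$, le lemme sur les hypersurfaces donne que $p^*N_i$ est une sous-variété semi-legendrienne de dimension 3, de régularité $\mc{C}^1$, transverse à $p$. Si $\dim N_i = 2$ et si le plan tangent à $N_i$ n'est complexe en aucun point, le lemme sur les surfaces fournit la même conclusion (lisse de dimension 3, feuilletée par les cercles critiques) ; lorsque le plan tangent devient complexe le long d'un sous-ensemble, je raffinerais la stratification en l'isolant comme strate de dimension strictement inférieure. Pour $\dim N_i \leq 1$, on a simplement $p^*N_i = p^{-1}(N_i)$, lisse de dimension $\leq 3$, et l'énoncé $p(p^*N_i) = N_i$ est trivial.

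Ensuite, je vérifierais que $\{p^*N_i\}$ constitue bien une stratification de $p^*N$. La finitude locale est immédiate à partir de celle de $\{N_i\}$. La condition de frontière $p^*N_i \subset \overline{p^*N_j}$ pour $N_i < N_j$ repose crucialement sur (C) : étant donné $(z_\infty, \lambda_\infty) \in p^*N_i$, on choisit $z_n \in N_j$ avec $z_n \to z_\infty$ ; la condition (C) donne la convergence de $T_{z_n}N_j$ vers un plan limite $T$ contenant $T_{z_\infty}N_i$. L'équation \eqref{eq_cercle_critique} définissant les directions critiques (et son analogue 3-plans pour les hypersurfaces) dépend continûment du plan tangent, si bien qu'on peut choisir une direction critique $\lambda_n$ en $z_n$ telle que $\lambda_n \to \lambda_\infty$, ce qui donne la suite voulue dans $p^*N_j$. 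L'inclusion réciproque (toute limite de points de $p^*N_j$ se projetant sur $N_i$ appartient à $p^*N_i$) provient symétriquement du caractère fermé de la non-transversalité.

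Le principal obstacle me semble double : d'une part, garantir la régularité $\mc{C}^1$ de chaque strate de $p^*N$, ce qui, pour les strates provenant de surfaces, exploite véritablement l'hypothèse $\mc{C}^2$ sur $N$ via la variation lisse du cercle critique déduite du lemme correspondant ; d'autre part, raffiner soigneusement la stratification pour respecter la décroissance stricte des dimensions imposée par l'axiome (puisque, par exemple, $p^*N_i = p^{-1}(N_i)$ pour $\dim N_i = 1$ a la même dimension 3 que le relevé d'une strate de dimension 2), tout en préservant la condition (C). L'égalité $p(p^*N)=N$ s'obtient alors directement en recollant les identités $p(p^*N_i)=N_i$ strate par strate.
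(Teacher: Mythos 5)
Votre découpage strate par strate, fondé sur les lemmes locaux des sous-sections précédentes puis sur la condition (C) pour le recollement, suit la même route générale que l'article, mais l'étape centrale de recollement contient une erreur réelle. Vous consacrez l'essentiel de l'argument à la condition de frontière $p^*N_i\subset\overline{p^*N_j}$, en affirmant que pour tout $(z_\infty,\lambda_\infty)\in p^*N_i$ on peut choisir des directions critiques $\lambda_n$ de $N_j$ en $z_n\to z_\infty$ avec $\lambda_n\to\lambda_\infty$. C'est faux : si $\mr{dim}(N_j)=3$, l'ensemble des directions non transverses à $N_j$ en $z_n$ est réduit au seul point $T^{1,0}_{z_n}N_j$, dont la limite est l'unique point $T^{1,0}H$ de l'hyperplan limite $H$ ; il ne peut donc pas approcher un point arbitraire du cercle critique de $T_{z_\infty}N_i$. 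De même, au-dessus d'une strate $N_i$ de dimension 1, où $p^*N_i=p^{-1}(N_i)$ est tout le fibré en $\mathbb{P}^1$, la condition de frontière échoue encore plus nettement. Notez d'ailleurs que la définition de stratification adoptée dans l'article n'exige pas cette condition de frontière, seulement la finitude locale et l'inégalité des dimensions sur les adhérences : la condition que vous cherchez à établir est donc à la fois fausse en général et inutile ici.

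Ce qu'il faut réellement vérifier --- et ce que fait l'article --- est l'inclusion opposée : pour $N_i<N_j$ avec $\mr{dim}(N_i)=2$, toute limite de points de $p^*N_j$ au-dessus de $N_i$ doit appartenir à $p^*N_i$, de sorte que $p^*N$ soit bien une sous-variété stratifiée avec des sauts de dimension corrects. Cela résulte de la condition (C) jointe au fait élémentaire que si un hyperplan réel $H$ contient un 2-plan $P$, alors $T^{1,0}H$ est une direction critique pour $P$ (en effet $T^{1,0}H+P\subset H\neq\mathbb{C}^2$) ; ainsi $\overline{p^*N_j}\cap p^{-1}(N_i)$ est une section du fibré en cercles $p^*N_i$. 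Chez vous, ce point n'apparaît que dans la phrase invoquant le caractère fermé de la non-transversalité, qui à elle seule ne donne que la non-transversalité de $\lambda_\infty$ par rapport au plan limite $\lim T_{z_n}N_j$, et non par rapport à $T_{z_\infty}N_i$ ; c'est précisément là que la condition (C) et l'argument de contenance ci-dessus doivent intervenir, et ils ne sont pas explicités. Vos deux observations finales --- la nécessité de raffiner la décomposition parce que les relevés des strates de dimensions 1, 2 et 3 sont tous de dimension 3, et le traitement des tangences complexes à l'intérieur des strates de dimension 2 --- sont légitimes et même plus soigneuses que l'article, qui se contente d'écrire que les $p^*N_i$ et leurs intersections forment une stratification ; mais elles ne réparent pas l'étape de recollement défectueuse.
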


\begin{proof}
Au vu des résultats des sections précédentes, il suffit de voir que les relevés $p^*N_i$ des différentes strates se recollent bien, c'est à dire que les $p^*N_i$ et leurs intersections forment une stratification d'une sous-variété.
Comme $p^*N_i$ est le fibré entier $p^{-1}(N_i)$ si $\mr{dim}(N_i)\leq 1$, et au vu de la condition (C), il n'y a qu'à vérifier les paires de strates $N_i<N_j$ où $\mr{dim}(N_i)=2$.
Mais si un hyperplan $H$ contient un 2-plan $P$, alors $T^{1,0}H$ est une direction complexe critique par rapport à $P$.
Ainsi par la condition (C) l'intersection $\overline{p^*N_j}\cap p^*N_i$ est une section du fibré en cercles $p^*N_i$, ce qui conclut la preuve.
\end{proof}

\begin{thm}
\label{thm_stratification_2}
Soit $M$ une sous-variété réelle de dimension trois, de régularité $\mc{C}^2$ dans $\mathbb{P}(T \mathbb{P}^2(\mathbb{C}))$.
On suppose que $N:=p(M)$ est une sous-variété de $\mathbb{P}^2(\mathbb{C})$, et que $M$ et $N$ admettent des stratifications $M=\cup M_i$ et $N=\cup N_i$ de sorte que les strates de dimension 3 de $M$ soient semi-legendriennes et que $(N_i)$ vérifie la condition (C).
On suppose de plus que $p$ envoie toute strate de $M$ dans une strate de $N$, et est une submersion en restriction à chaque strate.
Alors $M\subset p^*(p(M))$.
\end{thm}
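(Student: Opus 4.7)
The plan is to argue stratum by stratum. Given $w \in M$ in a stratum $M_k$, let $N_{j(k)}$ be the stratum of $N$ through $p(w)$; by hypothesis $p|_{M_k}$ is a submersion onto $N_{j(k)}$. Since $p^*N$ is by its stratified definition the union $\bigcup_j p^*N_j$, it suffices to show $w \in p^*N_{j(k)}$ for every such $w$, and I would split into three cases.

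The first two cases should follow directly. If $\dim N_{j(k)} \leq 1$, the lift $p^*N_{j(k)} = p^{-1}(N_{j(k)})$ is the whole fiber bundle, and the inclusion is free. If $\dim M_k = 3$, then $M_k$ is semi-legendrienne by hypothesis and the submersion property places us in the setting of one of the two reverse lemmas proved earlier: for $\dim N_{j(k)} = 3$ the stratum $M_k$ is transverse to $p$ and Lemma \ref{lem_hyp+legendrien_1} applies; for $\dim N_{j(k)} = 2$ the kernels of $dp|_{M_k}$ are vertical curves forming a foliation whose leaf space is transverse to $p$, so Lemma \ref{lem_surf+legendrien_1} applies.

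The hard part is the remaining case $\dim M_k < 3$ with $\dim N_{j(k)} \geq 2$, where $M_k$ carries no semi-legendrian information of its own. Here my plan is to approach $w$ from a 3-dimensional stratum and invoke condition (C). Since $M$ has pure dimension 3 and the stratification is locally finite, iterated use of the strict dimension drop between adjacent strata produces a 3-dimensional stratum $M_l$ with $M_k \subset \overline{M_l}$; continuity of $p$ then gives $N_{j(k)} \leq N_{j(l)}$ for the $N$-stratum $N_{j(l)}$ through $p(M_l)$. Choose $w_n \in M_l$ with $w_n \to w$; the previous case gives $w_n \in p^*N_{j(l)}$, which means $\lambda(w_n)$ satisfies the criticality equation \eqref{eq_cercle_critique} for the real plane $T_{p(w_n)}N_{j(l)}$. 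Condition (C) then provides (along a subsequence) a limit plane $P \supset T_{p(w)}N_{j(k)}$ for these tangent planes, and the criticality equation being closed in the pair (plane, direction), the limit $\lambda(w)$ is critical for $P$; this forces $D_{\lambda(w)} + T_{p(w)}N_{j(k)} \subset D_{\lambda(w)} + P \neq \mathbb{C}^2$, so $\lambda(w)$ is critical for $N_{j(k)}$ itself and $w \in p^*N_{j(k)}$, closing the argument.
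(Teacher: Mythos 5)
Your proposal is correct and follows essentially the same route as the paper's proof: the inclusion is trivial for strata whose image has dimension at most 1, the three-dimensional semi-legendrian strata are handled by the reverse lemmas \ref{lem_hyp+legendrien_1} and \ref{lem_surf+legendrien_1}, and the remaining two-dimensional strata are treated by approximating with points of an adjacent three-dimensional stratum and using condition (C) to pass criticality of the slope to the limit. The only difference is one of detail: you make explicit the closedness of the non-transversality condition and the containment $D_{\lambda(w)}+T_{p(w)}N_{j(k)}\subset D_{\lambda(w)}+P$, which the paper leaves implicit.
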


\begin{proof}
D'après les hypothèses, pour chaque strate $M_i$ de dimension 3 et chaque $w\in M_i$, le germe $(M_i,w)$ muni de la projection $p$ est dans l'une des situations décrites dans les sous-sections précédentes, donnant le résultat sur ces strates.
Pour les strates $M_i$ telles que $\mr{dim}(p(M_i))\leq 1$, on a $p^*(p(M_i))=p^{-1}(p(M_i))\supset M_i$.
Si $M_i$ est une strate de dimension 2 transverse à $p$, il existe une strate $M_j$ de dimension 3 avec $M_i<M_j$ du fait que $M$ est de dimension 3.
Soit $w\in M_i$ et $w_n\in M_j$ une suite de points tels que $w_n\to w$.
Notons $w_n=(z_n,\lambda_n)$ et $w=(z,\lambda)$.
Par le caractère semi-legendrien de $M_j$, le nombre $\lambda_n$ est la pente en $z_n$ de la strate de $N$ correspondante.
Par la condition (C), on déduit que $\lambda$, qui est égal à la limite des $\lambda_n$, est une direction critique pour $p(M_i)$ en $z$.
Le résultat s'ensuit.
\end{proof}

\section{Dualité dans $\mathbb{P}^2(\mathbb{C})$}
\label{sec_dualite}

\subsection{Généralités}

Considérons l'espace 
\[
X = \mathbb{P}(T \mathbb{P}^2)= \{ (z,d)\in \mathbb{P}^2\times \check{\mathbb{P}}^2, z\in d\}
\]
muni des projections $p: X \rightarrow \mathbb{P}^2$ vers l'espace ambient et $\pi: X \rightarrow \check{\mathbb{P}}^2$ vers l'espace dual, et de la structure de contact commune aux deux projections.
À une sous-variété $N\subset \mathbb{P}^2$ de régularité $\mc{C}^1$ on peut associer la variété $p^*N$ définie en section \ref{sec_semi_legendrienne}.
On définit alors le \emph{dual} $\check{N}$ de $N$ comme étant 
\[
\check{N} := \pi(p^*N).
\]

\begin{lem}
\label{lem_bidualite_generique}
Soit $N$ un germe de sous-variété lisse générique de régularité $\mc{C}^2$.
Alors son dual $\check{N}$ est lisse de régularité $\mc{C}^1$ et vérifie $N=\check{\check{N}}$.
\end{lem}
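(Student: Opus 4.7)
The plan is to combine Theorems~\ref{thm_stratification_1} and~\ref{thm_stratification_2} with the symmetry between the two contact-compatible projections $p:X\to\mathbb{P}^2$ and $\pi:X\to\check{\mathbb{P}}^2$. Since $N$ is a smooth $\mathcal{C}^2$ submanifold (stratified trivially by itself, which clearly satisfies condition~(C)), Theorem~\ref{thm_stratification_1} provides that $p^*N\subset X$ is a $\mathcal{C}^1$ submanifold whose top-dimensional stratum is semi-legendrian, and satisfies $p(p^*N)=N$. The genericity hypothesis on $N$ ensures in particular that $T_zN$ is never a complex subspace and that no couple $(z,\lambda)$ with $\lambda$ critical is exceptional in the sense of Définition~\ref{df_exceptionnel}; this prevents the critical circle from degenerating and guarantees that $p^*N$ has the expected dimension.

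I would then show that $\pi$ is regular on $p^*N$, so that $\check{N}=\pi(p^*N)$ is again a $\mathcal{C}^1$ submanifold of $\check{\mathbb{P}}^2$. In the principal case where $N$ is a real surface, $p^*N$ has real dimension 3, and genericity forces $\pi|_{p^*N}$ to be of maximal rank, with image a $\mathcal{C}^1$ real hypersurface $\check{N}$ of $\check{\mathbb{P}}^2$. The remaining cases fit the same scheme: if $N$ is a point or a real curve, then $p^*N=p^{-1}(N)$ and the picture for $\check{N}$ follows from the explicit description in Section~\ref{sec_semi_legendrienne}; if $N$ is a real hypersurface, then $p^*N$ is the graph of the complex tangent direction and $\check{N}$ is read off directly.

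The bidual is then obtained by applying the symmetric construction to $\check{N}$: interchanging the roles of $p$ and $\pi$, Theorem~\ref{thm_stratification_1} applied to $\check{N}$ produces a $\mathcal{C}^1$ semi-legendrian lift $\pi^*\check{N}\subset X$ with $\pi(\pi^*\check{N})=\check{N}$. Applying now Theorem~\ref{thm_stratification_2} to the semi-legendrian submanifold $p^*N$, with $\pi$ in the role of the projection, yields the inclusion
\[
p^*N \subset \pi^*\check{N}.
\]
In the generic setting both members are $\mathcal{C}^1$ submanifolds of $X$ of the same real dimension, so this inclusion is an equality of germs, and consequently
\[
\check{\check{N}} = p(\pi^*\check{N}) = p(p^*N) = N.
\]

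The main obstacle is the rank analysis of $\pi|_{p^*N}$ and the verification that the inclusion $p^*N\subset\pi^*\check{N}$ is actually an equality. Both rest on the genericity hypothesis, which precisely excludes the exceptional couples of Lemme~\ref{lem_exceptionnel} and the totally exceptional configurations of Section~\ref{sec_exceptions} that would cause $\check{N}$ to drop in dimension or $\pi^*\check{N}$ to strictly contain $p^*N$.
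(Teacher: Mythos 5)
Your argument is correct and is essentially the paper's proof: genericity gives that the semi-legendrian lift $p^*N$ is smooth, $\mc{C}^1$ and transverse to $\pi$ (equivalently, no exceptional couples by Remarque~\ref{rmq_exceptionnel}), whence $\pi^*\check{N}=p^*N$ and $\check{\check{N}}=p(p^*N)=N$. The only difference is packaging: the paper invokes the local reconstruction lemmas of Section~\ref{sec_semi_legendrienne} (which give $\pi^*(\pi(p^*N))=p^*N$ as an equality directly), while you route through Théorèmes~\ref{thm_stratification_1} and~\ref{thm_stratification_2} and upgrade the resulting inclusion $p^*N\subset\pi^*\check{N}$ to an equality by a dimension count — an equally valid, if slightly heavier, way to reach the same conclusion.
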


\begin{proof}
Pour un germe de sous-variété générique $N$, la sous-variété semi-legendrienne $p^*N$ est lisse, $\mc{C}^1$ et transverse à $\pi$.
D'après les résultats de la section \ref{sec_semi_legendrienne}, on a $\pi^*(\pi(p^*N)) = p^*N$, c'est à dire $\pi^*\check{N}=p^*N$.
De plus $p(p^*N)=N$, d'où le résultat.
\end{proof}

Voyons ce que cela dit dans le cas où $N=S$ est une surface.

Si $S$ est une surface, $\check{S}$ est génériquement de dimension réelle 3, mais peut contenir des portions de dimension 2 si $S$ contient des parties holomorphes ; un point de $S$ dont la tangente est complexe correspond à un point singulier de $\check{S}$ ; et chaque fois qu'il existe un germe de courbe réel $C\subset S\cap D$ pour une droite complexe $D$, cette droite $D$ correspond à un point singulier de $\check{S}$.

\begin{rmq}
\label{rmq_exceptionnel}
On voit que les points $(z,\lambda)\in p^*S$ qui sont critiques pour la projection $\pi$ sont exactement les points exceptionnels au sens de la définition \ref{df_exceptionnel}.
\end{rmq}

En général le dual d'une hypersurface $H\subset \check{\mathbb{P}}^2$ sera de dimension 3.
On remarque cependant que si $d\in H$ et $z= T^{1,0}_dH$, alors l'intersection $z\cap H$ est génériquement de dimension 1 et singulière en $d$ ; mais que dans le cas où $H=\check{S}$ est le dual d'une surface, la situation est particulière.
En effet, $p^*S$ est fibré par des cercles, donc génériquement $\pi(p^*S)$ admet une fibration par courbes $C_t$.
Cependant, par définition la variété $\pi^*H$ est une section du fibré en $\mathbb{P}^1$ au-dessus de $H$, et celle-ci est constante égale à $z$ au-dessus de $C_t$.
On en déduit que les courbes $C_t$ sont exactement les intersections entre les droites complexes tangentes et $H$.

\begin{rmq}
Cette propriété est à comparer à la propriété que l'enveloppe de la famille des droites tangentes à une courbe réelle $C\subset \mathbb{P}^2(\mathbb{R})$ est $C$ elle-même.
En effet, dans le cas ci-dessus, on peut aussi vérifier que si $S$ est un germe de surface lisse de tangente non complexe et si $\lambda$ est une direction critique non exceptionnelle, alors pour toute courbe $c(t)$ sur $S$ passant par l'origine dans la direction $\lambda$, et toute famille de droites non transverses $d(t)$ en $c(t)$ telle que $d(0)$ est dans la direction $\lambda$, alors l'intersection $d(0)\cap d(t)$ tend vers l'origine quand $t\to 0$.
Ceci peut être vérifié en calculant explicitement $\Omega_S \wedge \Omega_{\lambda(t)}$ comme dans le lemme \ref{lem_exceptionnel}.

Comme l'hypersurface duale $\check{S}$ contient aussi les cercles $C_t$ définis ci-dessus, cette remarque permet de reprouver que la droite $z$ dans $\check{\mathbb{P}}^2$ est la tangente complexe à $\check{S}$.
\end{rmq}

Appelons \emph{exceptionnelle} une hypersurface $H\subset \mathbb{P}^2$ telle que pour tout $z\in H$, si $d=T^{1,0}_zH$, alors pour tout $z'$ dans l'intersection $d\cap H$ proche de $z$, la droite $d$ est toujours tangente à $H$ en $z'$.
On a donc vu que si $H$ est le dual d'une surface, elle est exceptionnelle ; on voit aussi que si $H$ est exceptionnelle, la dimension de $p^*H$ chute lors de la projection par $\pi$, et que le dual $\check{H}$ est de dimension au plus 2 dans $\check{\mathbb{P}}^2$.

\begin{lem}
\label{lem_bidualite}
Supposons que $(S,z_0)\subset \mathbb{P}^2$ soit un germe de surface réelle lisse de régularité $\mc{C}^2$ tel que $T_{z_0}S$ ne soit pas complexe et tel qu'il existe une droite complexe non exceptionnelle $d_0$ dans le cercle critique.
Alors le germe dual $(\check{S},d_0)$ est lisse de régularité $\mc{C}^1$, et son dual $\check{\check{S}}$ vérifie $S=\check{\check{S}}$.
\end{lem}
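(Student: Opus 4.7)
L'approche suivra de près celle de la preuve du lemme générique \ref{lem_bidualite_generique}, l'hypothèse de non-exceptionnalité servant précisément à garantir la régularité requise au voisinage de $d_0$.

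La première étape consiste à vérifier que $p^*S$ est un germe de sous-variété semi-legendrienne lisse de régularité $\mc{C}^1$ dans $\mathbb{P}(T\mathbb{P}^2)$ au voisinage de $(z_0, d_0)$. Cela résulte immédiatement du lemme sur les surfaces de la section \ref{sec_semi_legendrienne}, puisque l'hypothèse que $T_{z_0}S$ n'est pas complexe est exactement celle qui y est demandée.

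On veut ensuite montrer que $\check{S} = \pi(p^*S)$ est lisse de régularité $\mc{C}^1$ au voisinage de $d_0$. Il suffit pour cela que $\pi|_{p^*S}$ soit une immersion en $(z_0, d_0)$. Or la remarque \ref{rmq_exceptionnel} caractérise les points critiques de $\pi|_{p^*S}$ comme exactement les points exceptionnels au sens de la définition \ref{df_exceptionnel}. Puisque $d_0$ est par hypothèse non exceptionnelle en $z_0$, la projection $\pi|_{p^*S}$ est donc une immersion locale en $(z_0, d_0)$, et $\check{S}$ est bien un germe d'hypersurface réelle lisse $\mc{C}^1$ en $d_0$.

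Pour établir la bidualité, on appliquera alors le lemme \ref{lem_hyp+legendrien_1} à $M = p^*S$ en prenant $\pi$ à la place de $p$ ; les hypothèses (régularité, caractère semi-legendrien, transversalité à la projection) sont assurées par les deux étapes précédentes. On en déduit $\pi^*\check{S} = \pi^*(\pi(p^*S)) = p^*S$. En combinant cela avec l'identité $p(p^*S) = S$ de la section \ref{sec_semi_legendrienne} et la définition $\check{\check{S}} = p(\pi^*\check{S})$, on obtient $\check{\check{S}} = p(p^*S) = S$ comme germes en $z_0$.

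L'étape la plus délicate est la seconde : c'est la remarque \ref{rmq_exceptionnel} qui effectue la traduction cruciale entre l'hypothèse géométrique de non-exceptionnalité de $d_0$ et la condition de transversalité de $p^*S$ aux fibres de $\pi$, sans laquelle on ne pourrait pas invoquer directement le lemme \ref{lem_hyp+legendrien_1} pour régulariser $\check{S}$ puis refermer la dualité.
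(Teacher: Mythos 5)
Votre démonstration est correcte et suit exactement la voie que le texte indique : elle consiste à dérouler le lemme \ref{lem_bidualite_generique} en remplaçant l'hypothèse de généricité par la non-exceptionnalité de $d_0$, la remarque \ref{rmq_exceptionnel} fournissant la transversalité de $p^*S$ à $\pi$ nécessaire pour appliquer le lemme \ref{lem_hyp+legendrien_1} et conclure $\pi^*\check{S}=p^*S$ puis $\check{\check{S}}=p(p^*S)=S$. C'est précisément l'argument (laissé implicite) du papier, simplement explicité.
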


\begin{proof}
C'est une version du lemme \ref{lem_bidualite_generique} rendant plus précise la notion de généricité en utilisant la remarque \ref{rmq_exceptionnel}.
\end{proof}

Cette propriété de bidualité permet de conclure immédiatement qu'il est impossible que deux germes de surfaces réelles aient le même dual.
Par suite, il est impossible qu'une surface admette une famille à trois paramètres de droites bicritiques (propriété dont la preuve était laissée à la sagacité du lecteur dans \cite{pohl_ordnungsgeometrie}).

\subsection{Application : algébricité des surfaces réelles}

On voit que si la surface réelle $S$ est compacte sans bord et suffisamment générique, son dual $\check{S}$ sera une hypersurface réelle compacte sans bord de $\check{\mathbb{P}}^2$, de sorte que $\check{\mathbb{P}}^2\setminus \check{S}$ se décompose en une union disjointe
\[
\check{\mathbb{P}}^2\setminus \check{S} = \bigcup V_i,
\]
où le nombre d'intersection entre une droite de $V_i$ et $S$ est un nombre constant $n_i$.
Si deux ouverts $V_i,V_j$ admettent une frontière commune $H\subset \check{S}$, alors le nombre d'intersection saute de deux lors du passage de la frontière : $n_j = n_i \pm 2$.

L'hypothèse de généricité de la surface $S$ est à comprendre au sens de la section \ref{sec_stratifications} : on veut que $p^*S$ soit suffisamment transverse à la projection $\pi$.
Cependant, cette hypothèse n'est ici utile que pour que le dual soit une belle hypersurface globale ; l'argument du passage de frontière n'a pas besoin d'une telle généricité pour fonctionner :

\begin{thm}
\label{thm_principal}
Soit $S$ une surface réelle connexe compacte irréductible de $\mathbb{P}^2(\mathbb{C})$ de régularité $\mc{C}^2$.
Alors ou bien
\begin{enumerate}
\item\label{item_1} $S$ est une courbe algébrique complexe, ou
\item\label{item_2} 
$S$ est le compactifié d'un plan affine réel, ou
\item\label{item_3} il existe deux ouverts non vides $V_1,V_2\subset \check{\mathbb{P}}^2$ tels que toute droite de $V_i$ intersecte $S$ transversalement en $n_i$ points, et $n_1\neq n_2$.
\end{enumerate}
\end{thm}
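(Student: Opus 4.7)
The plan is to partition the argument into three exclusive cases corresponding to the three conclusions of the theorem. First, I would dispose of the case where $T_zS$ is a complex line at every smooth point of $S$: a $\mathcal{C}^2$ real submanifold of real dimension $2$ with everywhere complex tangent space is a complex analytic curve, and compactness plus Chow's theorem give conclusion~(\ref{item_1}). Otherwise, by irreducibility of $S$, the open set $S^o := \{ z \in S : T_zS \text{ is not complex}\}$ is dense.

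Second, I would treat the case where every critical couple $(z,\lambda)$ with $z \in S^o$ is exceptional in the sense of Definition~\ref{df_exceptionnel}. Corollary~\ref{cor_R2} then shows that each germ $(S,z)$, $z \in S^o$, is contained in a real affine $2$-plane $P_z$; since $P_z$ is determined by $z$ and $T_zS$, the map $z \mapsto P_z$ is locally constant on $S^o$. A continuity and connectedness argument gives a single real affine plane $P$ containing $S^o$: at any limit point $z \in S \setminus S^o$ the tangent $T_zS$ would be complex, and the inclusion $T_zS \subset P$ would force $P$ itself to be a complex line, which is excluded on $S^o$. Then $S \subset \overline{P}$, and compactness combined with irreducibility forces $S = \overline{P}$, which is conclusion~(\ref{item_2}).

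Third, suppose there is a non-exceptional critical couple $(z_0,\lambda_0)$ with $z_0 \in S^o$. By Remark~\ref{rmq_exceptionnel}, the projection $\pi : p^*S \to \check{\mathbb{P}}^2$ is a submersion at $w_0 := (z_0,\lambda_0)$, so $\check{S}$ is a smooth real $\mathcal{C}^1$ hypersurface near $d_0 := \pi(w_0)$. A generic choice inside the open non-exceptional locus of $p^*S$ lets me further assume that $d_0$ is tangent to $S$ at the single point $z_0$ (the multi-tangent subset of $\check{S}$ being of strictly smaller dimension) and that the contact is of Morse type. The local analysis of Section~\ref{sec_rappels} then asserts that in the pencil of lines through $\lambda_0$, the parameter space $(\mathbb{C}_\mu,0)$ is split by a real curve into one component in which a nearby line meets $S$ in $0$ points near $z_0$ and one in which it meets $S$ in $2$ transverse points; the other intersections of $d_0 \cap S$ remain transverse under perturbation. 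Hence the two adjacent components $V_1, V_2 \subset \check{\mathbb{P}}^2 \setminus \check{S}$ yield transverse intersection counts $n_1$ and $n_2 = n_1 + 2$, giving conclusion~(\ref{item_3}).

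The main obstacle is the genericity step in the third case: producing a point $d_0 \in \check{S}$ that is simultaneously a smooth point of $\check{S}$, tangent to $S$ at a single point, and nondegenerately tangent there. This combines openness of the non-exceptional locus in $p^*S$, a codimension argument bounding the multi-tangent subset of $\check{S}$, and the explicit local structure from Section~\ref{sec_rappels} (in particular equation~\eqref{eq_cercle_critique}). A secondary technical point, in the second case, is the local-to-global extension of the plane containment without appealing to a real-analytic identity principle; I would handle it through the continuity of $z \mapsto P_z$, connectedness of $S$, and the nondegeneracy argument across $S \setminus S^o$ sketched above.
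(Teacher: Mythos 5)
Your trichotomy and the overall strategy (duality, then a jump of the intersection number by $2$ across $\check{S}$) are the same as the paper's, and your handling of cases \eqref{item_1} and \eqref{item_2} via Corollaire \ref{cor_R2} is in the same spirit as the paper's argument. The genuine gap is in case \eqref{item_3}, exactly at the point you yourself flag as the main obstacle: you assume that the multi-critical (``bicritical'') subset of $\check{S}$ has strictly smaller dimension and propose to obtain this by a codimension count. Such a count is only valid if the various local sheets of $\check{S}$ meet each other in a thin set; a priori two far-apart pieces $S_i,S_j\subset S$ could have duals $\check{S}_i,\check{S}_j$ coinciding on an open subset of $\check{S}$, in which case every line near your $d_0$ would be critical at two distinct points of $S$ and crossing $\check{S}$ could change the count by $2\pm 2$, possibly $0$. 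Ruling this out is precisely the bicritical-lines problem left open in Pohl and R. Thom, and it is not a transversality statement; the paper settles it with the biduality Lemma \ref{lem_bidualite} (a dense inclusion $\check{S}_i\cap\check{S}_j$ forces $\check{S}_i\subset\check{S}_j$, hence $S_i\subset S_j$, absurd), applied to a finite family of smooth local duals covering the compact set $\check{S}\setminus\check{F}^{(\varepsilon)}$, together with a finiteness/density argument producing a small ball $V$ meeting only one sheet. Your proposal never uses biduality at this stage, so the decisive step is missing.

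A secondary omission: you must also keep $d_0$ and the nearby lines away from criticalities coming from the bad locus $F=F_{\mathbb{C}}\cup F_{ex}\cup\mr{Sing}(S)$, e.g.\ lines through singular or complex-tangency points, where ``transverse in $n_i$ points'' is not controlled by your single smooth tangency; since $\mr{Sing}(S)$ may be a curve, the set of lines meeting it is $3$-dimensional, so this is not automatic. The paper does this by showing that $\check{F}$ is closed and cannot contain $\check{S}$ (its pieces are $2$-dimensional or foliated by complex lines) and then working outside an $\varepsilon$-neighbourhood $\check{F}^{(\varepsilon)}$. Finally, a small slip: at a non-exceptional point $\pi|_{p^*S}$ is an immersion of the $3$-dimensional $p^*S$ into the $4$-dimensional $\check{\mathbb{P}}^2$, not a submersion; the local smoothness of $\check{S}$ you want there is exactly Lemma \ref{lem_bidualite}.
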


\begin{proof}
Les points de $S$ peuvent être de quatre types : les points singuliers $\mr{Sing}(S)$, qui forment un sous-ensemble fermé de dimension au plus 1 ; les points à tangence complexe, $F_{\mathbb{C}}$ qui forment un sous-ensemble fermé ; les points $z$ tels que tout couple $(z,\lambda)$ critique soit exceptionnel, que l'on regroupe en un ensemble $F_{ex}$ fermé ; et les points qui se comportent régulièrement pour la dualité.
D'après les hypothèses et le corollaire \ref{cor_R2}, si $S$ est l'union de $F_{\mathbb{C}}$, $F_{ex}$ et $\mr{Sing}(S)$, alors on est dans l'un des cas \eqref{item_1} ou \eqref{item_2}.
En effet, la surface $\mathbb{R}^2$ a un angle holomorphe $\theta=\pi/2$ et une courbe complexe a un angle $\theta=0$ ; on ne peut pas recoller les deux de manière $\mc{C}^1$.
On peut donc supposer qu'il existe un point $z_0\in S$ lisse dont le plan tangent n'est pas complexe et tel qu'il existe en $z_0$ une direction critique non exceptionnelle.

Considérons le fermé $F = F_{\mathbb{C}}\cup F_{ex}\cup\mr{Sing}(S)$.
Son dual $\check{F}$ est l'image par $\pi$ d'un compact, donc est aussi fermé ; il ne contient pas $\check{S}$ car les duaux de $F_{\mathbb{C}}$ et $F_{ex}$ sont de dimension 2 et le dual de $\mr{Sing}(S)$ est feuilleté par des droites complexes.
Par suite il existe $\varepsilon > 0$ tel que l'ensemble 
\[
\check{F}^{(\varepsilon)} := \{l\in \check{\mathbb{P}}^2, d(l, \check{F}) < \varepsilon \}
\]
ne contienne pas $\check{S}$, où la distance $d$ est la métrique de Fubini-Study.
Alors $\check{S}^{(\varepsilon)} := \check{S}\setminus \check{F}^{(\varepsilon)}$ est compact ; d'après le lemme \ref{lem_bidualite}, il existe un nombre fini de petits ouverts $S_i\subset S$ tels que les $\check{S}_i$ soient lisses et recouvrent $\check{S}^{(\varepsilon)}$.

Pour tout $i$, il est impossible que l'intersection $\check{S}_i\cap (\cup_{j\neq i} \check{S}_j)$ soit dense dans $\check{S}_i$, car sinon, par finitude il existerait un $j$ tel que $\check{S}_i\cap \check{S}_j$ soit dense dans $\check{S}_i$ (quitte à restreindre les $S_i$).
Ceci impliquerait $\check{S}_i\subset \check{S}_j$, puis $S_i\subset S_j$ d'après le lemme \ref{lem_bidualite}.

Ainsi il existe une petite boule ouverte $V\subset \check{\mathbb{P}}^2\setminus \check{F}^{(\varepsilon)}$ tel que $V\cap \check{S}_j\neq \emptyset$ si et seulement si $j=i$.
Par suite $V\setminus \check{S} = V_+\cup V_-$ se décompose en deux, et d'après les discussions de la section \ref{sec_rappels}, les nombres d'intersection $n_+,n_-$ entre une droite de $V_+$ ou $V_-$ et $S$ sont constants sur $V_+$ et $V_-$, et $n_+=n_-+2$.
\end{proof}

\begin{rmq}
Considérons la surface affine réelle $\mathbb{P}^2(\mathbb{R})\subset \mathbb{P}^2(\mathbb{C})$.
On voit que $\mathbb{P}^2(\mathbb{R})$ est son propre dual donc que le complémentaire du dual est connexe ; le nombre d'intersection générique entre $\mathbb{P}^2(\mathbb{R})$ et une droite complexe doit ainsi être constant.
Évidemment, ce nombre est 1 : c'est le nombre de points réels d'une droite complexe générale.
\end{rmq}

\begin{cor}
\label{cor_principal}
Soit $V$ une sous-variété réelle connexe compacte irréductible de dimension réelle $2n$ et de régularité $\mc{C}^2$ de $\mathbb{P}^{n+k}(\mathbb{C})$, et $G$ la grassmannienne des k-plans linéaire complexes.
Alors ou bien
\begin{enumerate}
\item $V$ est une sous-variété algébrique complexe, ou
\item $V$ est le compactifié d'une sous-variété affine réelle de $\mathbb{C}^{n+k}$, ou
\item il existe deux ouverts non vides $U_1, U_2\subset G$ tels que tout plan de $U_i$ intersecte $V$ transversalement en $n_i$ points, et $n_1\neq n_2$.
\end{enumerate}
\end{cor}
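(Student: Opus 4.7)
On se ramènera au théorème~\ref{thm_principal} en découpant $V$ par des sous-espaces linéaires génériques afin d'obtenir une surface réelle dans $\mathbb{P}^2(\mathbb{C})$, puis en interprétant la trichotomie qui en résulte en termes de $V$.

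On commencera par poser $\Lambda := H_1\cap\cdots\cap H_{n-1} \cong \mathbb{P}^{k+1}(\mathbb{C})$, où les $H_i$ sont $n-1$ hyperplans complexes génériques de $\mathbb{P}^{n+k}(\mathbb{C})$, de sorte que $S := V\cap\Lambda$ soit une surface réelle compacte $\mc{C}^2$, irréductible quitte à passer à une composante. Comme tout $k$-plan complexe $\Pi\subset\Lambda$ est aussi un $k$-plan de $\mathbb{P}^{n+k}(\mathbb{C})$ et vérifie $\Pi\cap V = \Pi\cap S$, les nombres d'intersection sont préservés. On choisira ensuite une projection linéaire générique $\pi : \Lambda \dashrightarrow \mathbb{P}^2(\mathbb{C})$ (depuis un sous-espace $L\subset\Lambda$ de codimension projective $3$, disjoint de $S$ pour des raisons de dimension, et vide pour $k=1$), dont les préimages des droites $\ell'\subset\mathbb{P}^2(\mathbb{C})$ sont exactement les $k$-plans de $\Lambda$ contenant $L$. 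L'image $S' := \pi(S)$ est alors une surface réelle de $\mathbb{P}^2(\mathbb{C})$ et $\#(\ell'\cap S') = \#(\pi^{-1}(\ell')\cap V)$ pour toute droite générique $\ell'$.

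Le théorème~\ref{thm_principal} appliqué (à une composante de) $S'$ donnera une trichotomie, dont l'alternative~(iii) se relèvera directement : en laissant varier la donnée $(H_1,\ldots,H_{n-1},L)$ dans un ouvert de son espace de paramètres, les ouverts $V_1, V_2 \subset \check{\mathbb{P}}^2(\mathbb{C})$ produiront deux ouverts $U_1, U_2 \subset G$ de $k$-plans de $\mathbb{P}^{n+k}(\mathbb{C})$ satisfaisant la conclusion voulue.

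Le principal obstacle sera le traitement des alternatives~(i) et~(ii) : il faudra en déduire que $V$ est respectivement une sous-variété algébrique complexe ou le compactifié d'une sous-variété affine réelle. L'argument-clé est que $\pi$ étant $\mathbb{C}$-linéaire, elle préserve la nature complexe du plan tangent et son angle holomorphe (section~\ref{sec_rappels}) ; si $S'$ est une courbe algébrique complexe (resp.\ totalement réelle d'angle $\pi/2$) en tout point lisse, alors il en va de même pour $T_sS \subset T_s\Lambda$. En laissant varier $(H_i, L)$ à un point $v\in V$ fixé, les $2$-plans tangents $T_vS$ balayent un ouvert de $2$-plans réels de $T_vV$, forçant $T_vV$ à être soit un $n$-plan complexe dans le cas~(i), soit un $2n$-plan totalement réel dans le cas~(ii). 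On conclura par le théorème de Chow dans le premier cas, et par un analogue multidimensionnel du corollaire~\ref{cor_R2} dans le second.
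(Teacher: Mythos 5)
Votre stratégie globale est exactement celle du texte : on se ramène à $n=k=1$ par sections hyperplanes génériques puis projections linéaires génériques, on applique le théorème~\ref{thm_principal} à la surface obtenue, et on relève l'alternative (iii) dans la grassmannienne $G$ par stabilité des intersections transverses (ce relèvement, ainsi que la correspondance entre droites de $\mathbb{P}^2(\mathbb{C})$ et $k$-plans contenant $L$, est correct, quitte à demander aussi que $L$ évite les sécantes et tangentes de $S$ pour que $\pi|_S$ soit un plongement et que les nombres d'intersection se correspondent).

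En revanche, votre \emph{argument-clé} pour les cas (i) et (ii) contient des affirmations qui ne tiennent pas telles quelles. D'abord, une projection $\mathbb{C}$-linéaire ne préserve pas l'angle holomorphe (elle n'est pas unitaire) ; ce qui est vrai, et qui suffit pour le cas (i), est seulement ceci : si $d\pi$ est injective en restriction à $T_sS+iT_sS$, alors $d\pi(T_sS)$ est complexe si et seulement si $T_sS$ l'est — sans cette hypothèse de généricité, un 2-plan non complexe peut très bien se projeter sur une droite complexe. Ensuite, dans le cas (ii), la conclusion « $T_vV$ est un $2n$-plan totalement réel » est impossible dès que $n>k$ (un sous-espace totalement réel de $\mathbb{C}^{n+k}$ est de dimension réelle au plus $n+k$), et ce n'est pas ce qu'affirme l'énoncé (2), qui demande l'inclusion dans une sous-variété affine réelle ; surtout, la totale réalité ponctuelle du plan tangent ne donne aucune linéarité (toute petite perturbation de $\mathbb{R}^2\subset\mathbb{C}^2$ reste totalement réelle sans être affine). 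Il faut exploiter l'information plus forte fournie par le théorème : chaque section projetée est \emph{contenue dans un plan affine réel}, de sorte que $V$ est balayée par des segments de droites réelles dans suffisamment de directions, ce qui force l'inclusion dans un sous-espace affine réel — c'est le « on vérifie aisément » du texte, et c'est là que votre « analogue multidimensionnel du corollaire~\ref{cor_R2} » doit être rendu précis. Enfin, deux points restent à traiter : un argument de connexité (analogue au recollement impossible $\theta=0$ / $\theta=\pi/2$ dans la preuve du théorème~\ref{thm_principal}) pour exclure que des sections différentes tombent les unes dans le cas (i), les autres dans le cas (ii) ; et le cas où $V\cap\Lambda$ est non connexe, car le nombre d'intersection avec $V$ somme les contributions de toutes les composantes, si bien qu'appliquer le théorème à une seule composante ne relève pas immédiatement l'alternative (iii).
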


\begin{proof}
On se ramène à $n=k=1$ comme dans \cite{thom_varietes_ordre_fini}, en considérant des sections planes puis des projections.
On vérifie aisément que les seules variétés qui donnent des plans réels ou des courbes complexes pour n'importe quelle section plane et n'importe quelle projection sont les sous-variétés affines réelles et les sous-variétés algébriques complexes.
\end{proof}

\bibliography{dualite}{}
\bibliographystyle{acm}

\textsc{IMPA, Estrada Dona Castorina, 110, Horto, Rio de Janeiro, Brasil}

\textit{Email :} olivier.thom@impa.br

\end{document}